\documentclass{article}       % use "amsart" instead of "article" for AMSLaTeX format

\usepackage{geometry}               % See geometry.pdf to learn the layout options. There are lots.
\usepackage{graphicx}
\usepackage[usenames,dvipsnames]{xcolor}
\usepackage{amssymb,amsmath,amsthm,amsfonts}
\allowdisplaybreaks

\usepackage[english]{babel}
\usepackage[utf8]{inputenc}
\usepackage[colorlinks]{hyperref}
\hypersetup{linkcolor=blue,citecolor=blue,filecolor=black,urlcolor=blue}

\usepackage[sc]{mathpazo}
\usepackage{pgf}

\usepackage{tikz}
\newtheorem{proposition}{Proposition}[section]
\newtheorem{theorem}[proposition]{Theorem}
\newtheorem{corollary}[proposition]{Corollary}
\newtheorem{lemma}[proposition]{Lemma}

\theoremstyle{definition}

\theoremstyle{remark}
\newtheorem{remark}[proposition]{Remark}
\numberwithin{equation}{section}
\newcommand{\eps}{\varepsilon}

\newcommand{\N}{{\mathbb{N}}}
\newcommand{\Z}{{\mathbb{Z}}}

\newcommand{\R}{{\mathbb{R}}}

\newcommand{\sphere}{{\mathbb{S}}}
\newcommand{\from}{\colon}

\newcommand{\Scal}{{\mathcal{S}}}

\DeclareMathOperator{\spann}{span}

\title{Selection of the angular speed of rotating waves in segregated reaction-diffusion systems with asymmetric competition}

\author{Giuseppe Spadaro, Gianmaria Verzini and Alessandro Zilio}

\begin{document}

\maketitle

% ABSTRACT

\begin{abstract}
We investigate the existence of segregated rotating waves, arising in the singular limit 
of competition-diffusion systems of the type
\[
\partial_t u_i -\partial_{xx} u_i = f(u_i)-\beta u_i \sum_{j \neq i} a_{ij} u_j,\qquad x\in\mathbb{S}^1,\ t>0, 
1\le i,j\le k,
\]
as $\beta\to+\infty$. Here $k\ge3$, the reaction $f$ is of Fisher-KPP (logistic) type, and the competition  
coefficients $a_{ij}>0$ are not necessarily symmetric.  Assuming that, for every $i$,
\[
\dfrac{a_{i+1,i}}{a_{i,i+1}}=\lambda>0,
\]
we provide a complete characterization of the rotating waves enjoying an equivariant structure, where each 
density is a suitable rotation of any other one: such waves exist if and only if $\lambda$ belongs to an 
explicit range, in which case the angular velocity $\omega=\omega(\lambda)$ is uniquely prescribed, as is the 
rotating profile. In particular, stationary solutions (with $\omega=0$) 
exist only in the symmetric case $\lambda=1$. This marks a strong difference with the same problem with 
either Dirichlet or Neumann boundary conditions, where it is known that no periodic in time solution exists, 
also in the asymmetric case, sheding more light on some conjectures and open problems concerning the long time 
behavior of competition-diffusion systems.
\end{abstract}%
\noindent
{\footnotesize \textbf{AMS-Subject Classification}.}
{\footnotesize primary 35B25, 35B36, 34B08; secondary 35K51, 92D25.}\\
{\footnotesize \textbf{Keywords}.}
{\footnotesize Singular perturbation, moving free boundary, periodic solutions of semilinear parabolic systems, overdetermined problems.}

\section{Introduction}

This paper deals with existence, uniqueness and qualitative properties of rotating wave solutions 
to the singular limit system, arising in competitive reaction-diffusion models, when the interspecific 
competition rates become infinite. In particular, we show how the asymmetry of 
competition uniquely prescribes the speed of rotation in a rigid way. We first describe our main 
motivations, together with some consequences of our results, in Section \ref{sec:stateoftheart}; next, in Section 
\ref{sec:mainresults}, we specify the general assumptions we work with, and we state our main results.

\subsection{State of the art and motivations}\label{sec:stateoftheart}

Let us consider the following system of $k\ge 2$ nonnegative population densities $u_i=u_i(x,t)$, diffusing and interacting in 
a spatial domain $\Omega\subset\R^N$, $N\ge1$: 
\begin{equation}\label{eq:beta_sys}
        \partial_t u_i -\Delta u_i = f_i(u_i)-\beta u_i \sum_{j \neq i} a_{ij} u_j \qquad \text{ in } 
        \Omega\times(0,+\infty),\qquad 1\le i,j \le k,
\end{equation}
complemented with appropriate initial and boundary conditions. Here the reaction terms $f_i$ describe the 
intraspecific dynamics, while  $\beta>0$, $a_{ij}>0$, whence the competitive nature of the interactions. For system \eqref{eq:beta_sys}, the singular limit of strong competition is 
obtained by letting the competition parameter $\beta$ go to $+\infty$, while the coefficients $a_{ij}$ are 
kept fixed. In this case, the densities converge (uniformly and in suitable Sobolev norm) to limit 
functions $u_i$ which are \emph{segregated}, i.e.~they satisfy 
\[
u_i u_j\equiv0\qquad\text{ as }j\neq i,
\] 
hence determining a partition of the spatial domain. After seminal papers by Dancer and Du \cite{MR1303035,MR1312772,MR1312773}, 
the regularity 
and qualitative properties of the limit densities, as well as those of the emerging (possibly moving) 
free boundaries, have been the object of an intensive study in the last decades, see \cite{MR2151234,MR2146353,MR2283921,MR2384550,MR2529504} for the stationary case and \cite{MR1900331,MR2595199,MR2831712,MR2863857,MR2846360} for the evolutive one. In particular, it is known that  densities of the 
segregated limit are H\"older continuous and satisfy, in a weak sense, the system of $2k$ differential inequalities
\begin{equation}\label{eq:S}
    \partial_t u_i -\Delta u_i \le f_i(u_i),\quad
%\end{equation*}
%\begin{equation*}
    (\partial_t -\Delta)\left(u_i - \sum_{j \neq i} \frac{a_{ij}}{a_{ji}} u_j\right) \ge f_i(u_i) - \sum_{j \neq i} \frac{a_{ij}}{a_{ji}} f_j(u_j),\quad 1\le i,j \le k.
\end{equation}
In turn, \eqref{eq:S} entails the validity of the equation 
\[
\partial_t u_i -\Delta u_i = f_i(u_i)
\]
in the (open) space-time region where $u_i>0$ (and $u_j\equiv0$, $j\neq i$), and of the transmission condition 
\[
|\nabla u_i| = \frac{a_{ij}}{a_{ji}} |\nabla u_j|
\]
at points of the free boundary where only $u_i$ and $u_j$ meet (i.e.~$u_l$ is locally zero for every $l\neq i,j$).

Generally speaking, the free boundary splits into a regular part, where only two densities meet and the above transmission condition is satisfied, and a singular 
one, where three or more species interact. It is known that the description of the singular part of the free 
boundary changes drastically, depending on the competition coefficients $a_{ij}$. 
When the interaction is \emph{symmetric}, i.e.~$a_{ij}=a_{ji}$ for every $i\neq j$, the problem can be seen as 
that of a harmonic map into a singular stratified target (namely, solution vectors with only one nontrivial 
component), leading to conical singularities. On the other hand, in the \emph{asymmetric case}, which is of 
great interest for the applications, spiral patterns may arise: we refer to \cite{MR4020313}, which  contains 
a complete description of stationary spiral patterns in planar domains without reaction, and to the recent paper 
\cite{MR4877262}, where rotating spiral patterns have been constructed, in the model case of linear reactions 
and forced boundary conditions. For internal reactions of logistic type, rotating spiralling patterns have been 
detected numerically for three competing densities in \cite{MR2776460}, and numerical simulations suggest that  
such patterns arise, and are dynamically stable, only for some specific angular velocity.

The literature concerning these themes is huge and covers several different aspects, such as the 
classification/genericity of planar stationary configurations \cite{MR3948936,MR4298757,lanzara2024}, different 
bifurcation/continuation analysis \cite{MR4061643,MR4860862,MR5057427}, various nonlocal effects 
\cite{MR3259557,MR3730508,MR5049527} and the influence of the presence of a prey \cite{MR3864295,MR4026185}. 
In this paper we deal with a less studied issue, that is, the existence of limit segregated profiles 
enjoying additional properties related to the long time behavior of the evolutive solutions.

Actually, very little is known about the long time behavior of evolutive segregated solutions of 
\eqref{eq:S}, in dimension $N\ge2$; on the other hand, such topic has been successfully attacked 
in one space dimension, taking $\Omega=(0,L)$. This problem has been completely solved, in the case of 
logistic reactions, with either Dirichlet or Neumann boundary conditions on $\partial\Omega=\{0,L\}$, by 
Dancer, Wang and Zhang in \cite{MR2846360}: indeed, in such case, it is possibile to rule out the 
presence of singular points, up to a finite number of time instants $(T_l)_{l=1}^m$; moreover, up to 
relabelling the densities and discarding those which are identically zero in each $(T_l,T_{l+1})$, they 
prove that \eqref{eq:S} can be rewritten as
\begin{equation}\label{eq:S_DWZ}
\begin{cases}
\partial_t u_i -\partial_{xx} u_i = f_i(u_i)  \qquad \text{ for }\alpha_{i-1}(t) < x < \alpha_i(t),\ T_{l}< t < T_{l+1},\smallskip\\
u_i>0  \ \text{ for } \alpha_{i-1}(t) < x < \alpha_i(t),\qquad  u_i \cdot u_j \equiv 0 \text{ for }j\neq i,
\medskip\\
-\partial_x u_i = \dfrac{a_{i,i+1}}{a_{i+1,i}} \partial_x u_{i+1} \text{ at }x = \alpha_i(t).
\end{cases}
\end{equation}
Here the (increasing in $l$) 
\begin{figure}[t]
\begin{center}
\begin{tikzpicture}
    \draw[] (-.5,0) -- (8.5,0);
    \draw[black!40] (0,-.1) node[black, below]{\small $0$} -- (0,.1);
    \draw[black!40] (3,-.1) node[black, below]{\small $\alpha_{i-1}(t)$} -- (3,.1);
    \draw[black!40] (6,-.1) node[black, below]{\small $\alpha_{i}(t)$} -- (6,.1);
    \draw[black!40] (8,-.1) node[black, below]{\small $L$} -- (8,.1);
    \draw[->, black!60] (2,3.5) node[black, above]{\small $\partial_t u_i -\partial_{xx} u_i = f_i(u_i)$}
    -- (3.6,2);
    \draw[->, black!60] (7,3.1) node[black, above]{\small $-\partial_x u_i = \frac{a_{i,i+1}}{a_{i+1,i}} \partial_x u_{i+1}$}
    -- (6.1,.2);
    \draw[thick] (2.4,.5) edge[dashed, out=-20, in=150] (2.73,.35
    ) (2.73,.35) to[out=-30, in=110] (3,0) to[out=40, in=180] (4.7,2.6) to[out=0, in=100] (6,0)
     to[out=30, in=200] (6.3,0.15) edge[dashed, out=20, in=185] (6.8,0.21) ;
     \node[above] at (4.7,2.6) {\small $u_i$};
     \node[above left] at (2.4,.5) {\small $u_{i-1}$};
     \node[above right] at (6.8,0.21) {\small $u_{i+1}$};
\end{tikzpicture}
\caption{qualitative representation of solutions of system \eqref{eq:S_DWZ}.}
\label{fig:DWZ}
\end{center}
\end{figure}
time instants $T_l$ are those in which one or more densities become extinct, with $T_0=0$ and $T_{m+1}=+\infty$, the $C^1$ functions
\[
0=\alpha_0<\alpha_1(t)<\alpha_2(t)<\dots<\alpha_{k_l-1}(t)<\alpha_{k_l}=L,\qquad T_{l}< t < T_{l+1},
\]
locate the (finite number of) points of the free boundary at time $t$, all regular, and $k_l$ is the 
(decreasing in $l$) number of (components of) densities which are nontrivial on $(T_l,T_{l+1})$ (see Figure \ref{fig:DWZ}). This description brings much insight into the problem, regardless of the symmetry properties 
of the competition coefficients $a_{ij}$: indeed, on the one hand, in the symmetric case $a_{ij}=a_{ji}$ for every $i\neq j$, the transmission conditions imply that the function
\[
v = u_1 - u_2 + u_3 - \dots - (-1)^{k_l}u_{k_l}
\]
is $C^1$ in $[0,L]$ and it satisfies a suitable equation; on the other hand, the same can be inferred 
in the asymmetric case for the function
\begin{equation}\label{eq:combin}
\tilde v = u_1 - \dfrac{a_{21}}{a_{12}} u_2 + \dfrac{a_{21}a_{32}}{a_{12}a_{23}} u_3 - \dots - (-1)^{k_l} \prod_{i=1}^{k_l-1} \dfrac{a_{i+1,i}}{a_{i,i+1}}\, u_{k_l}.
\end{equation}
Using this characterization, complemented with suitable initial data and either Dirichlet or Neumann 
boundary conditions for $u_1$ at $x=0$ and $u_{k_l}$ at $x=L$, \cite{MR2846360} assures that 
there exists a unique solution of \eqref{eq:S_DWZ}, which converges, for $t\to+\infty$, to one of the finitely many steady 
states of the system.

As a consequence, system \eqref{eq:S_DWZ} has simple dynamics, in particular, has 
no periodic solution. As noticed in \cite{MR2846360}, this is relevant in relation with some conjectures and open 
problems raised by E.N. Dancer and Weiming Ni, concerning the long time behavior of both \eqref{eq:S} and 
\eqref{eq:beta_sys}.  

Motivated by the above analysis, and by the already mentioned existence of planar, rigidly rotating 
waves with spiral interfaces, obtained for linear reactions in \cite{MR4877262}, the following natural 
questions arises: considering system \eqref{eq:S_DWZ} in the one dimensional torus 
\[
\Omega=\mathbb{S}^1 =\mathbb{T}^1
\]
(or, equivalently, in $\Omega=(0,L)$ with periodic boundary conditions), does it admit simple dynamics 
as in the case of Dirichlet/Neumann boundary conditions? Or maybe more complex solutions arise, such as 
periodic in time solutions in the form of rotating waves? 

To analyse this question, we first notice that, with periodic conditions, one main tool from \cite{MR2846360} is no longer available, as, even in the case $u_1(0)=u_{k_l}(L)=0$,  
\eqref{eq:combin} could not be $C^1$ on $\sphere^1$ unless
\begin{equation}\label{eq:asymm_prod}
\prod_{i=1}^{k_l-1} \dfrac{a_{i+1,i}}{a_{i,i+1}} = \dfrac{a_{k_l,1}}{a_{1,k_l}}.
\end{equation}
Actually, this last condition can be seen as a generalization of the symmetry condition $a_{ij}=a_{ji}$ 
(which implies it), and in fact, in higher dimension, spirals appear only if \eqref{eq:asymm_prod} is violated \cite{MR4020313,MR4877262}; notice that, in particular, \eqref{eq:asymm_prod} is always satisfied for $k=2$ densities, thus $k\ge 3$ densities are needed to violate this condition. Moreover, since $\sphere^1$ is shift invariant, steady states of \eqref{eq:S_DWZ} are no longer 
isolated, as they come in circles (apart from the constant ones). Nonetheless, this latter fact is not 
by itself an obstruction to simple dynamics, at least for problems with fewer degrees of freedom: more 
precisely, it was proved by Matano in \cite{MR956085} that, roughly, any bounded solution of 
\[
\begin{cases}
\partial_t u -\partial_{xx} u = f(u) & x\in\sphere^1,\ t>0\\
u(x,0)=u_0(x) & x\in\sphere^1,
\end{cases}
\]
converges to a steady state as $t\to+\infty$. This is due to the fact that, in dimension 1, the number of 
zeros of a solution at time $t$ can be used as a discrete Lyapunov function. As a matter of fact, Matano's argument can be merged with  
the strategy in \cite{MR2846360} to show that, when \eqref{eq:asymm_prod} is satisfied, solutions of 
\eqref{eq:S_DWZ} converge to steady states also in the case of periodic conditions, as it is going to be proved in \cite{boselli}. 
Hence, under \eqref{eq:asymm_prod}, system \eqref{eq:S_DWZ} has simple dynamics also in the case of periodic boundary conditions, while the above questions are still open when \eqref{eq:asymm_prod} fails. 

The main aim of this paper is to provide some insight into this problem, by constructing periodic 
rotating solutions of \eqref{eq:S_DWZ} on $\sphere^1$, under some structural assumptions which imply that 
\eqref{eq:asymm_prod} fails. Moreover, we are going to show that the speed of rotation $\omega$ of such 
solutions is rigidly prescribed by the asymmetric nature of the competition parameters, and in particular 
steady states (i.e.~rotating waves with speed $\omega=0$) exist if and only if the interaction satisfy 
\eqref{eq:asymm_prod}. This shows that, passing from Dirichlet/Neumann to periodic conditions, if 
\eqref{eq:asymm_prod} fails, then \eqref{eq:S_DWZ} may admit periodic in time solutions, while nontrivial 
steady states may not exist. Therefore, passing to periodic boundary conditions, the 
asymmetric nature of the competition coefficients (irrelevant in the Dirichlet/Neumann case) gains a 
prominent role in the long time dynamics, and as a byproduct we identify a sort of dynamical Turing effect, 
triggered by such asymmetry.

For our construction, we work in a model case, where the $k$ densities enjoy an equivariant cyclic 
structure, so that we can look for solutions where each density is a suitable fixed rotation of any other 
one. For concreteness, given $R>0$ and 
$k\ge3$, let us consider system \eqref{eq:S_DWZ} in $\Omega = \R/(2\pi R\Z)$ (i.e.~the $k$ densities $u_i$ 
are $2\pi R$-periodic in space), in the case
\begin{equation}\label{eq:cyclic_data}
f_i(s)=f(s), \qquad \dfrac{a_{i+1,i}}{a_{i,i+1}}=\lambda>0, \qquad \text{ for every }1\le i \le k
\end{equation}
(understanding $\frac{a_{k+1,k}}{a_{k,k+1}}=\frac{a_{1,k}}{a_{k,1}}$). With this assumption, 
\eqref{eq:asymm_prod} rewrites as
\[
\lambda^{k-1}=\lambda,
\]
and the parameter $\lambda$ encodes the asymmetric nature of the competition coefficients $a_{ij}$: the 
symmetric case corresponds to $\lambda=1$, while \eqref{eq:asymm_prod} fails if and only if 
$\lambda\neq1$. In this setting, it makes sense to look for rotating wave 
solutions satisfying, for some fixed angular speed $\omega\in\R$, and $1\le i \le k$,
\begin{equation}\label{eq:cyclic_sols}
\alpha_i(t)= \omega t + i \frac{2 \pi R}{k},\qquad\qquad u_i(x,t)=U(x - \alpha_i(t)).
\end{equation}
It is easy to check that, under assumptions \eqref{eq:cyclic_data}, the densities in \eqref{eq:cyclic_sols} 
solve \eqref{eq:S_DWZ} (with $t\in\R$) if and only if the real variable profile $U$ solves, up to translations, 
the overdetermined problem
\begin{equation}\label{eq:U_coro}
    \begin{cases}
        -U'' -\omega U'= f(U) & \text{in } \left(-\frac{ \pi R}{k},\frac{ \pi R}{k} \right)\smallskip\\
        U > 0 & \text{in } \left(-\frac{ \pi R}{k},\frac{ \pi R}{k} \right)\smallskip\\
        U\left(-\frac{ \pi R}{k}\right)=U\left(\frac{ \pi R}{k}\right)=0\smallskip\\
        U'\left(-\frac{ \pi R}{k}\right)+\lambda U'\left(\frac{ \pi R}{k}\right)=0
    \end{cases}
\end{equation}

We leave to the next section the statement of our main results, in the full generality; we conclude 
this section with a partial consequence of them, which helps in highlighting the scope 
of our analysis, in view of the previous discussion.
\begin{theorem}\label{thm:coro}
Let us assume \eqref{eq:cyclic_data}, with the logistic-type reaction
\[
f(s)=as-bs^p,\qquad\text{where $a,b>0$ and $p>1$.}
\]
Then system \eqref{eq:S_DWZ} admits a rotating wave solution, as in \eqref{eq:cyclic_sols}, \eqref{eq:U_coro}, 
if and only if 
\[
3\le k <4R^2a\qquad\text{ and }\qquad \exp \left(-\pi\sqrt{\frac{4R^2a}{k}-1}\right)
<\lambda< \exp \left(\pi\sqrt{\frac{4R^2a}{k}-1}\right).
\] 
Moreover, for every $\lambda$ in this range, the angular velocity $\omega$ is uniquely prescribed, and the rotating wave 
solution is unique up to translations.  

In particular, $\omega=0$ if and only if $\lambda=1$, so that in the asymmetric case no nontrivial steady 
state exists, while in the symmetric one any rotating wave is just a steady state. 
\end{theorem}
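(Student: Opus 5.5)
We reduce everything to the overdetermined problem \eqref{eq:U_coro}. Set $L=\pi R/k$, so the interval is $(-L,L)$ of length $2L=2\pi R/k$. For fixed $\omega\in\R$ we first study the Dirichlet problem
\[
-U''-\omega U'=f(U),\quad U>0 \text{ in }(-L,L),\quad U(\pm L)=0 .
\]
The substitution $U(x)=e^{-\omega x/2}V(x)$ is not directly compatible with the nonlinearity. Instead we work variationally in the weighted space: the operator $-U''-\omega U'=-e^{-\omega x}(e^{\omega x}U')'$ is self-adjoint in $L^2(e^{\omega x}dx)$. Its first Dirichlet eigenvalue is
\[
\mu_1(\omega)=\frac{\pi^2}{4L^2}+\frac{\omega^2}{4},
\]
obtained by conjugating with $e^{-\omega x/2}$. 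Since $f(s)/s=a-bs^{p-1}$ is strictly decreasing, the standard Berestycki (Brezis–Oswald) argument applies, using the sublinear/KPP structure together with a sub- and supersolution pair $\eps\varphi_1$ and $(a/b)^{1/(p-1)}$. It gives that a positive solution exists iff $a>\mu_1(\omega)$, and that it is then unique. Call it $U_\omega$. The existence condition reads
\[
\omega^2<4a-\frac{\pi^2}{L^2}=4a-\frac{k^2}{R^2},
\]
which is nonempty iff $k^2<4R^2a$. I would double-check this threshold against the stated $k<4R^2a$, since the normalization of the circle may introduce factors; the plan is to track the constant carefully at this step. The remaining requirement $k\ge3$ is part of the setting.

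Next, the overdetermined condition. Define
\[
\Lambda(\omega)=-\frac{U_\omega'(-L)}{U_\omega'(L)}>0,
\]
so that we need $\Lambda(\omega)=\lambda$. Reflection $x\mapsto -x$ maps solutions for $\omega$ to solutions for $-\omega$; by uniqueness $U_{-\omega}(x)=U_\omega(-x)$, hence $\Lambda(-\omega)=1/\Lambda(\omega)$ and $\Lambda(0)=1$. The key claims are:
\begin{enumerate}
\item[(i)] $\Lambda$ is continuous and strictly monotone in $\omega$.
\item[(ii)] As $\omega^2\uparrow\omega_*^2:=4a-\pi^2/L^2$, $\Lambda(\omega)$ tends to an explicit limit computable from the linearization.
\end{enumerate}

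For (ii): as $\omega\to\pm\omega_*$, $U_\omega\to0$ by bifurcation from $\mu_1$, and $U_\omega/\|U_\omega\|$ converges to the principal eigenfunction $e^{-\omega x/2}\cos(\pi x/(2L))$. Hence
\[
\Lambda\to e^{\omega_*L},
\]
and I would check that $\omega_*L=\pi\sqrt{4R^2a/k-1}$ in the paper's normalization (again with the constant bookkeeping above). This yields precisely the range $e^{-\pi\sqrt{\cdots}}<\lambda<e^{\pi\sqrt{\cdots}}$, provided monotonicity holds.

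Monotonicity (i) is the main obstacle. To get a handle on it I would use first integrals. Multiplying the equation by $U'$ and integrating gives
\[
\tfrac12 U'(L)^2-\tfrac12 U'(-L)^2=-\omega\int_{-L}^L U'^2\,dx,
\]
so $\omega>0$ forces $|U'(L)|<|U'(-L)|$, i.e.\ $\Lambda>1$, and $\omega<0$ forces $\Lambda<1$. This already settles the final statement: $\omega=0$ iff $\lambda=1$.

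For strict monotonicity in $\omega$, I would try a shooting/phase-plane formulation. Write $U'=-\omega$-damped dynamics: the ODE $U''=-\omega U'-f(U)$, shot from $U(-L)=0$ with $U'(-L)=s>0$. Then differentiate with respect to $\omega$, using uniqueness and the nondegeneracy of $U_\omega$ (the linearized Dirichlet operator has a positive first eigenvalue, since $f(s)/s$ is decreasing). This makes $\omega\mapsto U_\omega$ smooth by the implicit function theorem. Setting $W=\partial_\omega U_\omega$, it solves
\[
-W''-\omega W'-f'(U)W=U',\qquad W(\pm L)=0 .
\]
We then need the sign of $\frac{d}{d\omega}\log\Lambda = -W'(-L)/U'(-L)+W'(L)/U'(L)$. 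To obtain it I would test against the adjoint solution, which is built from $U'$ and $e^{\omega x}$, and use Green's identities to express the derivative as a positive weighted integral. As a fallback, if this computation is messy, I would prove injectivity of $\Lambda$ directly: assume $\Lambda(\omega_1)=\Lambda(\omega_2)$ with $\omega_1<\omega_2$, rescale one profile so the boundary slopes match, and derive a contradiction from a zero-counting/sliding comparison between $U_{\omega_1}$ and $U_{\omega_2}$ in the phase plane.

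With (i) and (ii) in hand, the intermediate value theorem gives existence and uniqueness of $\omega(\lambda)$ for $\lambda$ in the open range and nonexistence outside it. Uniqueness of $U_\omega$ gives uniqueness of the profile up to translations.
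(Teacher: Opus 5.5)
Several parts of your proposal are sound. The fixed-$\omega$ theory is fine; the weighted Brezis--Oswald route is a legitimate substitute for the paper's sub/supersolution and sliding argument. The symmetry $\Lambda(-\omega)=1/\Lambda(\omega)$ and the endpoint limits $e^{\pm\omega_*L}$ also hold. Your constant bookkeeping is right: $L=\pi R/k$ gives $k^2<4R^2a$ and the exponent $\pi\sqrt{4R^2a/k^2-1}$, so the printed statement has $k$ where $k^2$ belongs. Your energy identity, showing $\Lambda\gtrless1$ according as $\omega\gtrless0$, is a nice direct proof of the last assertion; the paper instead gets it from symmetry plus injectivity.

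\textbf{The gap.} It sits exactly where you locate the main obstacle. Strict monotonicity of $\Lambda$ is not proved, and it is what gives uniqueness of $\omega$ (and, in your scheme, nonexistence outside the range). The Green's identity you suggest does not close. Testing $-W''-\omega W'-f'(U)W=U'$ against $e^{\omega x}U'$, which the linearized operator annihilates, gives exactly
\[
e^{-\omega L}U'(-L)W'(-L)-e^{\omega L}U'(L)W'(L)=\int_{-L}^{L}e^{\omega x}(U')^2\,dx>0,
\]
whereas
\[
\frac{d}{d\omega}\log\Lambda=\frac{W'(-L)}{U'(-L)}-\frac{W'(L)}{U'(L)}
\]
(your expression has the opposite sign). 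The two sets of weights, $e^{\mp\omega L}$ and $U'(\mp L)^{-2}$, are proportional only if $\Lambda=e^{\omega L}$, i.e. only for the linear problem.

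When $W'(\pm L)>0$ the sign of the derivative is immediate without any identity. The real difficulty is the configuration $W<0$ in $(-L,L)$ with $W'(-L)\le0<W'(L)$. There the identity only yields $|U'(L)W'(L)|>e^{-2\omega L}|U'(-L)W'(-L)|$, while you need $\Lambda^{-2}$ in place of $e^{-2\omega L}$. Since $\Lambda<e^{\omega L}$ for $\omega>0$ (multiply the equation for $V=e^{\omega x/2}U$ by $V'$ and integrate), the available bound is too weak. This configuration really occurs: for $p=2$ and $\omega$ near $\omega_*$, $W$ is to leading order a negative multiple of the principal eigenfunction.

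Your sliding fallback fails too. $U_{\omega_1}$ is neither a sub- nor a supersolution of the $\omega_2$-problem, because the defect $(\omega_1-\omega_2)U_{\omega_1}'$ changes sign at the maximum point. Rescaling by a constant also does not preserve the nonlinear equation.

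\textbf{What the paper does instead.} By the maximum principle for the linearized operator (whose principal Dirichlet eigenvalue is positive), for $\omega\ge0$ either $W$ changes sign once from positive to negative with $W'(\pm L)>0$, or $W<0$ in $(-L,L)$. In the latter case it uses that $\Lambda$ is scale invariant. With $x_m$ the maximum point, set $\tilde U=U/U(x_m)$. Then $\tilde W=\partial_\omega\tilde U$ vanishes at $x_m$ and solves the linearized Dirichlet problem with right-hand side
\[
r=\frac{g'(U)U-g(U)}{U(x_m)}\Bigl[\frac{U'}{g'(U)U-g(U)}-\frac{W(x_m)}{U(x_m)}\Bigr],\qquad g(s)=bs^p .
\]
Since $W(x_m)<0$, $r>0$ on $(-L,x_m]$. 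On $(x_m,L)$, two facts make $U'/(g'(U)U-g(U))$ strictly decreasing to $-\infty$: the log-concavity of $U$ (a separate lemma) and the monotonicity of $g'(s)-g(s)/s=b(p-1)s^{p-1}$. Hence $r$ changes sign exactly once. The same maximum-principle argument then forces $\tilde W'(\pm L)>0$, and differentiating $\tilde U'(-L)+\Lambda\tilde U'(L)=0$ gives $\dot\Lambda>0$.

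Without an argument of this strength, your proposal yields existence of some $\omega$ for each admissible $\lambda$ (by continuity and the endpoint limits). It does not yield uniqueness of that $\omega$, which is the heart of the theorem.
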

\begin{remark}\label{rmk:its_a_coro}
Although it is stated as a theorem, the result above is an easy corollary of our main results Theorems 
\ref{thm:main1}, \ref{thm:main2} ahead. On the other hand, by direct computations (using the substitution 
$u_i(x,t)=h_i\tilde u_i(k_i x,t+\xi_i)$, for suitable choices of $h_i$, $k_i$ and $\xi_i$), one can 
obtain a result like Theorem \ref{thm:coro} in a less equivariant setting (at least apparently), i.e.~when the segregated densities $\tilde u_i$ satisfy the equations
\[
\partial_t \tilde u_i -d_i\partial_{xx} \tilde u_i = a_i\tilde u_i-b_i\tilde u_i^p
\]
on the interval of length $\frac{2\pi R d_i}{k}$ where they are positive, which moves at speed $\omega$, 
the parameters are such that
\[
d_ia_i = a, \qquad \dfrac{a_{i+1,i}}{a_{i,i+1}}\cdot \left(\dfrac{d_i}{d_{i+1}}\right)^{\frac{p}{p-1}}\cdot \left(\dfrac{b_i}{b_{i+1}}\right)^{\frac{1}{p-1}}=\lambda, \qquad\qquad\text{ for every }i,
\]
and the structure is periodic in space of period
\[
L = \frac{2\pi R }{k}\,\sum_{i=1}^kd_i.
\]

Moreover, although our solutions are intrinsically one dimensional, they can be readily seen as 
one dimensional solutions in higher dimensional space domains.
\end{remark}

\subsection{Main results}\label{sec:mainresults}

To describe our main results, we first state our assumptions on the nonlinearity $f$, which is 
of Fisher-KPP (logistic) type, modeled on $f(u)=a u - b u^p$, with $a,b>0$ and $p>1$. 

As a more general nonlinearity, 
throughout the paper we assume that $f$ is of class $C^1$ and that there exists $\bar u>0$ 
such that
\begin{equation}\label{eq:HP_f_base}
\begin{cases}
f(0)=f(\bar u)=0\\ f(s)>0\text{ for }0<s<\bar u,\\ f(s)<0\text{ for }s>\bar u,
\end{cases}
\qquad\text{and we write }a:=f'(0);
\end{equation}
finally, we assume the following enhanced Fisher-KPP property: the map
\begin{equation}\label{eq:HP_f_enhancedKPP}
%f'(s_1) -\frac{f(s_1)}{s_1}>f'(s_2) -\frac{f(s_2)}{s_2}\qquad\text{ for }0<s_1<s_2<\bar u.
s\mapsto f'(s) -\frac{f(s)}{s}\qquad\text{ is strictly decreasing in }(0,\bar u).
\end{equation}
Some choices in our assumptions are clarified in the following two remarks.
\begin{remark}\label{rmk:HP2}
We remark that, since $\lim_{s\to0^+} \frac{f(s)}{s}=f'(0)$, assumption \eqref{eq:HP_f_enhancedKPP} entails
\begin{equation}\label{eq:HP_f_standardKPP}
f'(s) -\frac{f(s)}{s}<0\quad\text{ in }(0,\bar u), 
\end{equation}
which in turn implies some more standard Fisher-KPP-like conditions, such as:
\begin{equation}\label{eq:HP_f_monoto}
%f'(s_1) -\frac{f(s_1)}{s_1}>f'(s_2) -\frac{f(s_2)}{s_2}\qquad\text{ for }0<s_1<s_2<\bar u.
\frac{f(s)}{s}\text{ is strictly decreasing in }(0,\bar u),
\qquad\text{ whence }f'(0)=:a>0,\ f(s)<as\text{ for }s>0,
\end{equation}
see e.g.~\cite{Kolmogorov1937,MR511740,MR2214420}. 
To emphasize the role of the parameter $a$, which is central in our discussion, it is convenient to 
introduce the notation 
\begin{equation}\label{eq:f_g}
  f(u)=au - g(u),\qquad\text{ whence }g(0)=g'(0)=0,\ g(s)>0\text{ for }s>0.
\end{equation}
With this notation, \eqref{eq:HP_f_standardKPP} and \eqref{eq:HP_f_enhancedKPP} rewrite as
\begin{equation}\label{eq:HP_g_standardKPP}
g'(s) -\frac{g(s)}{s}>0\quad\text{ in }(0,\bar u), 
\end{equation}
\begin{equation}\label{eq:HP_g_enhancedKPP}
%f'(s_1) -\frac{f(s_1)}{s_1}>f'(s_2) -\frac{f(s_2)}{s_2}\qquad\text{ for }0<s_1<s_2<\bar u.
g'(s) -\frac{g(s)}{s}\qquad\text{ is strictly increasing in }(0,\bar u),
\end{equation}
respectively.\\ 
It is worth mentioning that, while most of our proofs are carried out just using the weaker condition  
\eqref{eq:HP_g_standardKPP}, the stronger one \eqref{eq:HP_g_enhancedKPP} is essential in our proof of the 
crucial Lemma \ref{lem:sign_h_r} ahead.  
At this stage we do not know if such assumption is 
technical or substantial, and it is an interesting open problem whether it can be removed or not. 
For the model nonlinearity $f(u)=a u - b u^p$, with $a,b>0$ and $p>1$, we have
\[
\frac{g(s)}{s} = b s^{p-1},\qquad g'(s) -\frac{g(s)}{s} = b(p-1) s^{p-1},
\]
thus the two assumptions are essentially the same, and they are both satisfied. On the other hand, it is not difficult to construct 
nonlinearities $f$ satisfying \eqref{eq:HP_f_standardKPP} but not \eqref{eq:HP_f_enhancedKPP}.
\end{remark}
\begin{remark}\label{rmk:HP1}
The part of assumption \eqref{eq:HP_f_base} concerning the behavior of $f(s)$ for $s>\bar s$ 
is necessary only for the part of our results concerning non-existence or uniqueness. For the 
existence ones, it is enough to assume $f\in C^1([0,\bar u))$, 
$f(0)=\lim_{s\to \bar u}f(s)=0$, $f(s)>0$ for $s\neq0$. 
\end{remark}

By the discussion in Section \ref{sec:stateoftheart}, leading to the overdetermined problem \eqref{eq:U_coro}, and under the previous assumptions on the reaction $f$, 
from now on we focus on finding solutions of the problem
\begin{equation}\label{eq:u}
    \begin{cases}
        -u'' -\omega u'= f(u) = au - g(u)& \text{in } (-\alpha,\alpha)\\
        u > 0 & \text{in } (-\alpha,\alpha)\\
        u(-\alpha)=u(\alpha)=0,
    \end{cases}
\end{equation}
subject to the overdetermined condition 
\begin{equation}\label{eq:lambda}
u'(-\alpha)+\lambda u'(\alpha)=0.
\end{equation}
To obtain solutions of this overdetermined problem, we will always assume
\begin{equation}\label{eq:hp_a_alpha}
a > \frac{\pi^2}{4 \alpha^2},
\end{equation}
as otherwise \eqref{eq:u} does not admit solutions, for any $\omega$, see equation \eqref{eq:mu<0} 
ahead. To state our main result, it is convenient to introduce the thresholds
\begin{equation}\label{eq:omega*}
\omega^*:=\sqrt{4 a - \frac{\pi^2}{\alpha^2}},\qquad\qquad
\lambda^*:=e^{\omega^*\alpha}.
\end{equation}
Our main results are the following.

\begin{theorem}\label{thm:main1}
Let us assume \eqref{eq:HP_f_base}, \eqref{eq:HP_f_enhancedKPP}, \eqref{eq:hp_a_alpha} and \eqref{eq:omega*}.

For every choice of
\[
\frac{1}{\lambda^*}< \lambda < \lambda^*
\]
there exists a unique rotation speed $\omega =\omega(\lambda)$ such that the 
overdetermined problem \eqref{eq:u}, \eqref{eq:lambda} admits a positive solution  
$u=u(\cdot,\lambda)\in C^{3}([-\alpha,\alpha])$, which in turn is unique, too.

On the other hand, if
\[
\lambda\not\in\left(\frac{1}{\lambda^*}, \lambda^*\right)
\]
then the  overdetermined problem \eqref{eq:u}, \eqref{eq:lambda} has no positive 
solution, for any choice of $\omega$.
\end{theorem}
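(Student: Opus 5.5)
The plan is to treat $\omega$ as a parameter and to study the Dirichlet problem \eqref{eq:u} first. Then we define the ratio
\[
\Lambda(\omega):=-\frac{u_\omega'(-\alpha)}{u_\omega'(\alpha)},
\]
so that \eqref{eq:lambda} becomes the scalar equation $\Lambda(\omega)=\lambda$.

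\textbf{Step 1: the Dirichlet problem for fixed $\omega$.} Multiplying by $e^{\omega x}$ puts the equation in divergence form, $-(e^{\omega x}u')'=e^{\omega x}f(u)$. Existence and uniqueness of a positive solution then follow from the KPP structure \eqref{eq:HP_f_monoto}. Existence is by sub/supersolutions: $\bar u$ is a supersolution, and $\varepsilon\varphi_1$ is a subsolution, where $\varphi_1$ is the principal eigenfunction. Uniqueness uses the standard Brezis--Oswald argument for $f(s)/s$ decreasing.

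The principal eigenvalue is explicit. The substitution $u=e^{-\omega x/2}v$ turns $-u''-\omega u'=\mu u$ into $-v''=(\mu-\omega^2/4)v$, so $\mu_1(\omega)=\frac{\pi^2}{4\alpha^2}+\frac{\omega^2}{4}$. A positive solution therefore exists if and only if $a>\mu_1(\omega)$, that is, $|\omega|<\omega^*$. This is exactly where \eqref{eq:hp_a_alpha} and \eqref{eq:omega*} enter. By the implicit function theorem (the linearization is nondegenerate because of the KPP condition), $u_\omega$ depends in a $C^1$ way on $\omega\in(-\omega^*,\omega^*)$, and $u_\omega\to0$ as $|\omega|\to\omega^*$.

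\textbf{Step 2: limits of $\Lambda$.} Reflecting $x\mapsto -x$ sends $\omega$ to $-\omega$, so $\Lambda(-\omega)=1/\Lambda(\omega)$ and $\Lambda(0)=1$. As $\omega\to\pm\omega^*$, the normalized $u_\omega/\|u_\omega\|_\infty$ converges to the principal eigenfunction
\[
e^{-\omega^* x/2}\cos\!\Big(\frac{\pi x}{2\alpha}\Big).
\]
For this eigenfunction the ratio of the boundary derivatives equals $e^{\omega^*\alpha}=\lambda^*$ (respectively $1/\lambda^*$ at $-\omega^*$). Once we show that $\Lambda$ is strictly monotone, its range is exactly $(1/\lambda^*,\lambda^*)$. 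This gives existence and uniqueness of $\omega(\lambda)$, and uniqueness of $u$ follows from Step 1.

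For nonexistence outside this range, note that any solution of the overdetermined problem is a positive solution of \eqref{eq:u}, so it forces $|\omega|<\omega^*$, and then $\lambda=\Lambda(\omega)$ lies in the range. The $C^3$ regularity is immediate from $f\in C^1$.

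\textbf{Step 3: strict monotonicity of $\Lambda$ (the main obstacle).} The plan is to write $\log\Lambda$ and differentiate in $\omega$. Let $h=\partial_\omega u_\omega$; it solves
\[
-h''-\omega h'-f'(u)h=u',\qquad h(\pm\alpha)=0.
\]
Then
\[
\frac{d}{d\omega}\log\Lambda=\frac{h'(-\alpha)}{u'(-\alpha)}-\frac{h'(\alpha)}{u'(\alpha)}.
\]
Testing the equations for $u$ and $h$ against each other with the weight $e^{\omega x}$, and using the identities from Step 1, should express this derivative as an integral. One also needs a sign analysis of $h$, which I expect is the content of Lemma \ref{lem:sign_h_r}; the paper flags that the enhanced condition \eqref{eq:HP_g_enhancedKPP} is needed there.

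My first attempt would be to compare $h$ with a suitable combination of $u'$ and $xu'$, or with $u$ itself, using the operator identity $L(u)=g'(u)u-g(u)$, which has a sign by \eqref{eq:HP_g_standardKPP}. The goal is to show that $h$ changes sign exactly once, with a definite orientation. The strict monotonicity of $g'(s)-g(s)/s$ would then be used through a sliding or Sturm-type comparison, to conclude that $d\log\Lambda/d\omega>0$.
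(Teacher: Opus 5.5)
\textbf{Steps 1 and 2 are fine.} They essentially match the paper. Your compactness/normalization argument for the limits of $\Lambda$ at $\pm\omega^*$ is a legitimate substitute for the Crandall--Rabinowitz analysis. You do need to justify $u_\omega\to0$, by compactness together with nonexistence at $|\omega|=\omega^*$. You also need to pass to the limit in $C^1$ for $u_\omega/\|u_\omega\|_\infty$.

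\textbf{The gap is Step 3.} It carries the uniqueness of $\omega(\lambda)$. In your argument it also carries the identification of the range of $\Lambda$, which the nonexistence part needs. As written it is not a proof, and the target you set is false. You aim to show that $h=\partial_\omega u$ changes sign exactly once, from positive to negative. This is true at $\omega=0$, where $h$ is odd, but it fails near $\pm\omega^*$. There the solution collapses onto the principal eigenfunction, and $h\sim\hat B^{-1}u^*$ with $\hat B<0$, at least when $g''(0)>0$ (Remark \ref{rmk:altern_2}). So $h<0$ on all of $(-\alpha,\alpha)$, with $h'(-\alpha)\le 0<h'(\alpha)$ (alternative 2 of Lemma \ref{lem:cases_v}). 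In this regime the terms $h'(-\alpha)/u'(-\alpha)\le0$ and $-h'(\alpha)/u'(\alpha)>0$ in your formula have opposite signs, so the sign structure of $h$ tells you nothing. Moreover, testing the $u$- and $h$-equations against each other with weight $e^{\omega x}$ produces no boundary terms, because $u$ and $h$ both vanish at $\pm\alpha$. So that test cannot express $\frac{d}{d\omega}\log\Lambda$. In the paper it only serves to exclude $h>0$ everywhere when $\omega\ge0$.

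\textbf{The missing idea is the scale invariance of $\Lambda$.} Your formula for $\frac{d}{d\omega}\log\Lambda$ is unchanged if $h$ is replaced by $h-cu$, for any constant $c$.
\begin{itemize}
\item The paper chooses $c=h(x_m)/u(x_m)$, which amounts to differentiating $u/\max u$. Then $\tilde h=h-cu$ vanishes at the maximum point $x_m$.
\item It solves $-\tilde h''-\omega\tilde h'-f'(u)\tilde h=u'-c\,(g'(u)u-g(u))$.
\item When $c<0$, which is exactly the situation where $h$ itself fails, this right-hand side is positive on $(-\alpha,x_m]$. It changes sign exactly once on $(x_m,\alpha)$, because $u'/(g'(u)u-g(u))$ is strictly decreasing there.
\item That monotonicity is where the log-concavity $(u'/u)'<0$ (Lemma \ref{lem:z'}) and the enhanced condition \eqref{eq:HP_g_enhancedKPP} enter (Lemma \ref{lem:sign_h_r}).
\item The positivity of the principal eigenvalue of the linearized operator (Corollary \ref{coro:mp}) then forces $\tilde h>0$ on $(-\alpha,x_m)$ and $\tilde h<0$ on $(x_m,\alpha)$, with $\tilde h'(\pm\alpha)>0$.
\item This gives $\frac{d}{d\omega}\log\Lambda>0$.
\end{itemize}
Without this device, or an equivalent one, your Step 3 does not establish monotonicity. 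Hence neither the uniqueness of $\omega(\lambda)$ nor the nonexistence for $\lambda\notin(1/\lambda^*,\lambda^*)$ follows from your argument.
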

\begin{theorem}\label{thm:main2}
In the setting of Theorem \ref{thm:main1}, the map 
\[
\left(\frac{1}{\lambda^*}, \lambda^*\right) \ni \lambda\mapsto \left(u(\cdot,\lambda),\omega(\lambda)
\right) \in C^{2}([-\alpha,\alpha])\times (-\omega^*,\omega^*)
\]
is of class $C^1$ and satisfies
\[
\left(u(x,\lambda^{-1}),\omega(\lambda^{-1}) \right)= \left(u(-x,\lambda),-\omega(\lambda)
\right).
\]
Moreover (see Figure \ref{fig:la to om})
\[
\omega\text{ is strictly increasing and surjective,}
\qquad\omega(1)=0,\quad\text{and } \lim_{\lambda\to\lambda^*}\omega(\lambda)=\omega^*.
\]
\end{theorem}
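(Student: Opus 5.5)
Theorem \ref{thm:main2} rests on the structure of the problem for fixed $\omega$, so the plan is to first study \eqref{eq:u} alone and then impose \eqref{eq:lambda} through a scalar equation. For $|\omega|<\omega^*$ the linearization at $0$, $-\phi''-\omega\phi'=\mu\phi$ with Dirichlet conditions on $(-\alpha,\alpha)$, has first eigenvalue $\mu_1(\omega)=\frac{\pi^2}{4\alpha^2}+\frac{\omega^2}{4}$, via the substitution $\phi=e^{-\omega x/2}\psi$. Hence $a>\mu_1(\omega)$ exactly when $|\omega|<\omega^*$. By the KPP condition \eqref{eq:HP_f_monoto}, the standard sub/supersolution argument ($\epsilon\phi_1$ and $\bar u$) and a uniqueness argument (Brezis--Oswald type, via the weight $e^{\omega x}$ that makes the operator self-adjoint) give a unique positive solution $u_\omega$ when $|\omega|<\omega^*$, and none otherwise. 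The implicit function theorem then yields a $C^1$ map $\omega\mapsto u_\omega\in C^2$: the linearized operator $-\partial_{xx}-\omega\partial_x-f'(u_\omega)$ is invertible because $f'(u_\omega)<f(u_\omega)/u_\omega$ by \eqref{eq:HP_f_standardKPP}, so its first eigenvalue is positive. The reflection $x\mapsto -x$ sends a solution with speed $\omega$ to one with speed $-\omega$, so $u_{-\omega}(x)=u_\omega(-x)$.

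Next set $\Lambda(\omega):=-u_\omega'(-\alpha)/u_\omega'(\alpha)$, which is positive by Hopf's lemma and $C^1$. Then \eqref{eq:lambda} holds iff $\lambda=\Lambda(\omega)$, and the symmetry gives $\Lambda(-\omega)=1/\Lambda(\omega)$, in particular $\Lambda(0)=1$. The whole theorem therefore follows once we show that $\Lambda$ is a strictly increasing $C^1$ diffeomorphism of $(-\omega^*,\omega^*)$ onto $(1/\lambda^*,\lambda^*)$ with nonvanishing derivative. Then $\omega(\lambda)=\Lambda^{-1}(\lambda)$ is $C^1$, strictly increasing and surjective, and composing gives $\lambda\mapsto(u_{\omega(\lambda)},\omega(\lambda))$ of class $C^1$. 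The identity $u(x,\lambda^{-1})=u(-x,\lambda)$ follows from the reflection symmetry, and $\omega(1)=0$ follows from $\Lambda(0)=1$ together with injectivity.

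For the limits, as $\omega\to\omega^*$ we have $a-\mu_1(\omega)\to0$, so $u_\omega\to0$ (bifurcation from zero). After normalization, $u_\omega/\|u_\omega\|\to\phi_1^{\omega^*}=e^{-\omega^* x/2}\cos(\pi x/2\alpha)$. The ratio of its boundary derivatives is $e^{\omega^*\alpha/2}/e^{-\omega^*\alpha/2}=e^{\omega^*\alpha}=\lambda^*$, so $\Lambda(\omega)\to\lambda^*$, and by symmetry $\Lambda\to1/\lambda^*$ as $\omega\to-\omega^*$. Convergence in $C^1$ follows from the equation satisfied by $u_\omega/\|u_\omega\|_\infty$ together with $g(u)/u\to0$.

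The main obstacle is strict monotonicity, $\Lambda'(\omega)>0$. Differentiating gives $h=\partial_\omega u_\omega$, which solves $-h''-\omega h'-f'(u)h=u'$ with $h(\pm\alpha)=0$. Then
\[
\Lambda'(\omega)\propto -h'(-\alpha)u'(\alpha)+u'(-\alpha)h'(\alpha),
\]
so the goal is sign information on $h'(\pm\alpha)$, which is presumably Lemma \ref{lem:sign_h_r}. As a first attempt, I would compare $h$ with explicit functions such as $xu'$, $u'$ and $u$: for instance $u'$ solves the homogeneous linearized equation, and $w=u'+\frac{\omega}{2}(\dots)$-type combinations may produce the source term. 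Using the weighted Green identity with weight $e^{\omega x}$ against $u$, this would express the boundary combination as an integral whose sign is controlled by $g'(u)-g(u)/u$. Where monotonicity of that quantity is needed, I would use \eqref{eq:HP_g_enhancedKPP}, splitting the integral at the maximum point of $u$ and comparing values of $u$ at points with equal heights on the two sides.
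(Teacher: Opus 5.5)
Your architecture matches the paper's: a unique $u_\omega$ for $|\omega|<\omega^*$ with $C^1$ dependence, $\Lambda(-\omega)=1/\Lambda(\omega)$, and $\Lambda(\omega)\to\lambda^*$ as $\omega\to\omega^*$. Your blow-up argument for this limit is a fine substitute for the paper's Crandall--Rabinowitz analysis, once you justify $u_\omega\to0$. For that, test the equation with $e^{\omega x/2}\cos(\frac{\pi x}{2\alpha})$ to get $\int g(u_\omega)e^{\omega x/2}\cos(\frac{\pi x}{2\alpha})\,dx\le (a-\mu_1(\omega))\,C\to0$, then use compactness.

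The genuine gap is the step everything else depends on, $\Lambda'(\omega)>0$. It is what gives uniqueness of $\omega(\lambda)$, strict monotonicity and the $C^1$ inverse, and your proposal offers only tentative ideas for it. The one you describe concretely cannot work. Since $u(\pm\alpha)=h(\pm\alpha)=0$, any Green identity of $h=\partial_\omega u$ against $u$ cannot see $h'(\pm\alpha)$, because the boundary term $e^{\omega x}h'u$ vanishes. It only yields the interior relation $\int e^{\omega x}\bigl(g'(u)u-g(u)\bigr)h=-\frac{\omega}{2}\int e^{\omega x}u^2$. Testing against $u'$ does produce boundary terms, namely $e^{-\omega\alpha}u'(-\alpha)h'(-\alpha)-e^{\omega\alpha}u'(\alpha)h'(\alpha)=\int e^{\omega x}(u')^2>0$. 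But these carry the wrong weights to control $u'(-\alpha)h'(\alpha)-u'(\alpha)h'(-\alpha)$.

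Sign information on $h'(\pm\alpha)$, which is what you aim for, is also not enough by itself. The right-hand side of $-h''-\omega h'-f'(u)h=u'$ changes sign once, at the maximum point $x_m$, and the linearized operator obeys the maximum principle. Using the interior identity above to exclude $h>0$, one finds for $\omega\ge0$ two cases:
\begin{itemize}
\item either $h$ changes sign exactly once, whence $h'(\pm\alpha)>0$ and $\Lambda'>0$ immediately;
\item or $h<0$ in $(-\alpha,\alpha)$ with $h'(-\alpha)\le0<h'(\alpha)$.
\end{itemize}
The second case does occur, e.g.\ near $\omega^*$ in the logistic case with $h'(-\alpha)<0$, and there the sign of $\Lambda'$ is undetermined.

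The missing idea is to use that $\Lambda$ is invariant under $u\mapsto cu$, and to differentiate the normalized $\tilde u=u/u(x_m)$ instead. Then $\tilde v=\partial_\omega\tilde u$ vanishes at $x_m$, because $\tilde u(x_m(\omega),\omega)\equiv1$. It solves the linearized equation with right-hand side
\[
r=\frac{u'}{u(x_m)}-\frac{h(x_m)}{u(x_m)^2}\bigl(g'(u)u-g(u)\bigr).
\]
If $h(x_m)<0$, then $r>0$ on $(-\alpha,x_m]$, and $r$ changes sign exactly once in $(x_m,\alpha)$. This is because $\frac{u'}{g'(u)u-g(u)}=\frac{1}{g'(u)-g(u)/u}\cdot\frac{u'}{u}$ is strictly decreasing there. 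Proving that needs the log-concavity $(u'/u)'<0$ and, crucially, the enhanced KPP condition \eqref{eq:HP_g_enhancedKPP}, which your sketch invokes only vaguely.

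The same maximum-principle dichotomy then gives $\tilde v>0$ on $(-\alpha,x_m)$ and $\tilde v<0$ on $(x_m,\alpha)$. Hence $\tilde v'(\pm\alpha)>0$ and $\Lambda'=\bigl[\tilde u'(-\alpha)\tilde v'(\alpha)-\tilde u'(\alpha)\tilde v'(-\alpha)\bigr]/\tilde u'(\alpha)^2>0$. Negative $\omega$ follow from $\Lambda'(-\omega)=\Lambda'(\omega)/\Lambda(\omega)^2$. Without this or an equivalent argument, injectivity of $\Lambda$, and hence the theorem, is not established.
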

\begin{figure}[t]
\centering
	\input{logistic_BW/log_lambda_omega.pgf}
\caption{numerical plot of the map $\lambda\mapsto\omega(\lambda)$, in logarithmic scale ($\alpha=
\pi$, $f(s)=s-s^2$).}
\label{fig:la to om}
\end{figure}
\begin{proof}[Proof of Theorem \ref{thm:coro}]
The theorem readily follows from Theorems \ref{thm:main1}, \ref{thm:main2}, taking 
$f(u)=a u - b u^p$, with $a,b>0$ and $p>1$, and $\alpha=\frac{\pi R}{k}$.
\end{proof}
\begin{remark}\label{rmk:more_regular}
We stress that the assumptions on $f$ readily guarantee that any solution $u$ of \eqref{eq:u} is of class 
$C^3([-\alpha,\alpha])$, but the map 
\[
\left(\frac{1}{\lambda^*}, \lambda^*\right) \ni \lambda\mapsto \left(u(\cdot,\lambda),\omega(\lambda)
\right) \in C^{3}([-\alpha,\alpha])\times (-\omega^*,\omega^*)
\]
is only continuous. The choice of the space $C^2([-\alpha,\alpha])$ in Theorem \ref{thm:main2} is made
to obtain that the dependence of $u$ on $\omega$ is of class  $C^1$. 
\end{remark}

\begin{remark}\label{rmk:analisi}
We find nontrivial solutions if and only if 
\[
|\log \lambda| < \alpha \sqrt{4 a - \frac{\pi^2}{\alpha^2}}.
\]
In particular, for any fixed domain and reaction, if the asymmetry parameter is too large then the only 
possible (equivariant) steady state/periodic solution is that corresponding to the total extinction. On the other 
hand, if the domain, and hence $\alpha$, is sufficiently large, then $\lambda$ and $\omega$ can be taken 
large, as well. In particular, for $\alpha\to+\infty$ we obtain $\omega^* \to 2\sqrt{f'(0)} = c_{KPP}$, 
which is the well known speed of invasion of a compactly supported initial datum for the global Cauchy 
problem with logistic reaction in $\R$ \cite{MR511740}.
\end{remark}

The paper is structured as follows. In Section \ref{sec:omega_fixed} we tackle problem \eqref{eq:u}, 
taking $\omega$ as a fixed parameter, and we show existence and uniqueness of the solution in the 
appropriate range, as well as some further relevant property. In this way, a map $\omega \mapsto 
\lambda(\omega)$ is naturally well-defined. Section \ref{sec:invertibilty} is devoted to prove that such 
map is monotone, and hence invertible. Finally, in Section \ref{sec:bif} we perform a bifurcation 
analysis of problem \eqref{eq:u}, which in particular yields the relation between $\omega^*$ and 
$\lambda^*$ in \eqref{eq:omega*}, together with some refined local information about the bifurcation diagram under 
different choices of the reaction $f$.

\section{Analysis for fixed \texorpdfstring{$\omega$}{omega}}\label{sec:omega_fixed}

In this section we consider problem \eqref{eq:u}, understanding $\omega$ as a fixed parameter. We are going to prove the following result.
\begin{proposition}\label{prop:main_both}
Let us assume \eqref{eq:HP_f_base}, \eqref{eq:HP_f_enhancedKPP}, \eqref{eq:hp_a_alpha} and \eqref{eq:omega*}. 
For every choice of
\[
-\omega^*< \omega < \omega^*
\]
there exists a unique solution  
$u=u(\cdot,\omega) \in C^{3 }([-\alpha,\alpha])$ of \eqref{eq:u}. On the other hand, if $|\omega|\ge\omega^*$, then \eqref{eq:u} has no positive solution.

The map 
\[
\left(-\omega^*,\omega^*\right) \ni \omega\mapsto u(\cdot,\omega) 
\in C^{2 }([-\alpha,\alpha])
\]
is of class $C^1$ and satisfies (see Figure \ref{fig:u})
\begin{equation}\label{eq:symm_u}
u(x,-\omega) = u(-x,\omega).
\end{equation}
\end{proposition}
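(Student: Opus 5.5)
The plan is to remove the first-order term with the exponential change of variables $u(x)=e^{-\omega x/2}v(x)$. Then $v$ solves
\[
-v''=\Big(a-\tfrac{\omega^2}{4}\Big)v-e^{\omega x/2}g\big(e^{-\omega x/2}v\big)\quad\text{in }(-\alpha,\alpha),\qquad v(\pm\alpha)=0,\ v>0.
\]
Equivalently, $u$ is a critical point of the weighted functional
\[
J(u)=\int_{-\alpha}^{\alpha} e^{\omega x}\Big(\tfrac12 |u'|^2-F(u)\Big)\,dx,\qquad F(u)=\int_0^u f,
\]
since $-(e^{\omega x}u')'=e^{\omega x}f(u)$.

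The principal Dirichlet eigenvalue of the linearized operator $-\partial_{xx}-\omega\partial_x-a$ at $u=0$ is
\[
\mu_1=\frac{\pi^2}{4\alpha^2}+\frac{\omega^2}{4}-a,
\]
with positive eigenfunction $\varphi(x)=e^{-\omega x/2}\cos(\pi x/2\alpha)$. By definition of $\omega^*$, we have $\mu_1<0$ exactly when $|\omega|<\omega^*$.

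\textbf{Nonexistence for $|\omega|\ge\omega^*$.} Multiply \eqref{eq:u} by $e^{\omega x}\varphi$ and integrate by parts; boundary terms vanish since $u$ and $\varphi$ vanish at $\pm\alpha$. Using $f(u)<au$ from \eqref{eq:HP_f_monoto}, this gives
\[
\mu_1\int e^{\omega x}u\varphi=-\int e^{\omega x}g(u)\varphi<0.
\]
Hence $\mu_1<0$ is necessary. This is presumably the \eqref{eq:mu<0} referred to earlier.

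\textbf{Existence for $|\omega|<\omega^*$.} I would use sub- and supersolutions. The constant $\bar u$ is a supersolution. Since $\mu_1<0$ and $f(s)=as+o(s)$, the function $\varepsilon\varphi$ is a subsolution for $\varepsilon$ small. Monotone iteration then gives a solution with $\varepsilon\varphi\le u\le\bar u$. Positivity holds, and $u<\bar u$ follows from the strong maximum principle. Regularity $C^3$ comes from $f\in C^1$ and bootstrapping.

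\textbf{Uniqueness.} This is the main step. I would use the standard KPP argument based on $f(s)/s$ strictly decreasing, in Brezis--Oswald style. First, any positive solution satisfies $u<\bar u$: at an interior maximum above $\bar u$ we would have $-u''=f(u)<0$, a contradiction, and this uses the $s>\bar u$ part of \eqref{eq:HP_f_base}. Now let $u_1,u_2$ be two solutions. Both have the same nonzero boundary derivatives sign pattern by Hopf, so $u_1/u_2$ is bounded. Multiply the equation of $u_1$ by $e^{\omega x}(u_1^2-u_2^2)/u_1$, do the symmetric thing for $u_2$, and subtract. Integrating by parts in the weighted form gives
\[
\int e^{\omega x}\Big(\big|u_1'-\tfrac{u_1}{u_2}u_2'\big|^2+\big|u_2'-\tfrac{u_2}{u_1}u_1'\big|^2\Big)
=\int e^{\omega x}\Big(\tfrac{f(u_1)}{u_1}-\tfrac{f(u_2)}{u_2}\Big)(u_1^2-u_2^2).
\]
The left side is nonnegative and the right side is nonpositive, so both vanish and $u_1\equiv u_2$. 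The delicate point is justifying the boundary terms; this works via Hopf's lemma, which makes the quotients $u_1/u_2$ Lipschitz up to $\pm\alpha$. An alternative is a sliding/scaling argument: take the maximal $t\le1$ with $tu_1\le u_2$, and use that $tu_1$ is a strict subsolution.

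\textbf{Symmetry and regularity of the map.} For \eqref{eq:symm_u}, $x\mapsto u(-x,\omega)$ solves the problem with $-\omega$, so uniqueness gives the identity.

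For the $C^1$ dependence, apply the implicit function theorem to
\[
\Phi(u,\omega)=u''+\omega u'+f(u)\quad\text{on } C^2_0([-\alpha,\alpha])\times(-\omega^*,\omega^*)\to C^0 .
\]
This map is $C^1$ because $f\in C^1$. The key point is that the linearization $L=\partial_{xx}+\omega\partial_x+f'(u)$ has trivial kernel. To see this, note that $u$ satisfies $Lu=f'(u)u-f(u)<0$ by \eqref{eq:HP_f_standardKPP}, so $u$ is a positive strict supersolution of $-L$. Hence the principal eigenvalue of $-L$ is positive, and $L$ is invertible from $C^2_0$ onto $C^0$ by Fredholm theory.

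Continuity of the branch plus uniqueness then make the local implicit-function branches glue into a global $C^1$ map.
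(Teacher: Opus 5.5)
Your proposal is correct. The nonexistence test with $e^{\omega x}\varphi=\varphi_{-\omega}$, existence by monotone iteration between $\eps\varphi_\omega$ and $\bar u$, symmetry via uniqueness, and the implicit function theorem on $C^2_0\to C^0$ all coincide with the paper. You depart from the paper in two places.

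For uniqueness you lead with a weighted Brezis--Oswald identity. Your identity is right: the terms $\pm\int e^{\omega x}(|u_1'|^2-|u_2'|^2)$ cancel, and the boundary terms vanish since $u_2^2/u_1\to0$ at $\pm\alpha$. The paper instead uses the scaling/touching argument you list as your alternative. Both rest on $f(s)/s$ being strictly decreasing and on $u<\bar u$. The touching argument is shorter and needs no control of the quotients $u_2/u_1$ up to the boundary.

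For nondegeneracy you pass directly from $-Lu=g'(u)u-g(u)>0$ to $\Lambda>0$. The paper goes the other way: it first proves $\ker L=\{0\}$ by a Sturm--Picone argument on the first nodal interval of a putative kernel element (Lemma~\ref{lem:lin_injective}), and only then deduces $\Lambda>0$ (Corollary~\ref{coro:mp}). The reason is that the inf--sup formula with test function $u$ gives only $\Lambda\ge0$, since $(g'(u)u-g(u))/u=g'(u)-g(u)/u\to0$ at $\pm\alpha$. So your ``hence'' cannot rest on that bound. The one-line justification is to use that $e^{\omega x}(-L)$ is in divergence form and test the equation of the principal eigenfunction $\phi_1>0$ against $u$:
\[
\Lambda\int_{-\alpha}^{\alpha}e^{\omega x}u\,\phi_1=\int_{-\alpha}^{\alpha}e^{\omega x}\big(g'(u)u-g(u)\big)\phi_1>0 .
\]
This is the paper's Sturm--Picone computation applied to $\phi_1$ on the whole interval rather than to a nodal piece. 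It gives injectivity and the maximum principle of Corollary~\ref{coro:mp} in one stroke, so your ordering is slightly more economical.

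Finally, gluing the local branches needs no a priori continuity. Elements of $C^2_0$ close to $u(\cdot,\omega_0)$ are positive in $(-\alpha,\alpha)$ because $u'(\pm\alpha,\omega_0)\neq0$. Uniqueness then identifies the implicit-function branch with $u(\cdot,\omega)$.
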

\begin{figure}[t]
\centering
	\input{logistic_BW/u_x_omega.pgf}
\caption{numerical plot of the map $\omega\mapsto u(\cdot,\omega)$ ($\alpha=
\pi$, $f(s)=s-s^2$). The dashed line corresponds to the symmetric case $\omega=0$.}
\label{fig:u}
\end{figure}
\begin{remark}\label{rmk:symm_u}
In particular, \eqref{eq:symm_u} implies the well-known symmetry properties of the stationary solution 
of the Fisher-KPP equation without drift term, which is recovered with $\omega=0$:
\[
u(-x,0)=u(x,0).
\]
\end{remark}

The rest of this section is devoted to the proof of the proposition above.\medskip

First, the existence of a positive solution of \eqref{eq:u}, with $\omega$ in the appropriate range, 
follows from a direct application of the method of sub/supersolutions. Although part of the argument is quite standard, we summarize it both for the sake of completeness and because some details in the 
calculations will be used in the following.

The key point is that the existence of solutions is related to the sign of the principal eigenvalue 
of the linearized problem:
\begin{equation*}%\label{eq:linearized}
\begin{cases}
-\varphi_{\omega}'' - \omega \varphi_{\omega}' - a \varphi_{\omega} = \mu_{\omega} 
\varphi_{\omega}& \text{in } (-\alpha,\alpha)\\
        \varphi_{\omega} > 0 & \text{in } (-\alpha,\alpha)\\
        \varphi_{\omega}(-\alpha)=\varphi_{\omega}(\alpha)=0.
\end{cases}
\end{equation*}
By direct computations, in this case the eigenvalue and eigenfunction are explicit (the latter, 
up to a normalizing constant):
\[
\mu_{\omega}= \frac{\pi^2}{4 \alpha^2} + \frac{\omega^2}{4} - a,\qquad
\varphi_{\omega}(x)= e^{-\omega x /2} \cos{\left(\frac{\pi}{2\alpha}x\right)}.
\]
In particular, by \eqref{eq:hp_a_alpha}, \eqref{eq:omega*}, we have that
\begin{equation}\label{eq:mu<0}
\mu_{\omega} <0 \qquad\iff\qquad \text{\eqref{eq:hp_a_alpha} holds true and }|\omega|<\omega^*.
\end{equation}
This immediately rules out the existence of positive solutions for $|\omega|\ge\omega^*$ (or when \eqref{eq:hp_a_alpha} fails): indeed, would such a solution exist, one could test \eqref{eq:u} with $\varphi_{-\omega}$, obtaining 
\[
0>-\int_{-\alpha}^\alpha g(u)\varphi_{-\omega} = \mu_{-\omega} \int_{-\alpha}^\alpha u \varphi_{-\omega}
\ge0\qquad 
\text{when }|\omega|\ge\omega^*,
\]
contradiction (recall \eqref{eq:f_g}).

To find positive solutions of \eqref{eq:u}, 
we proceed with a series of lemmas.
\begin{lemma}\label{lem:apriori+subsup}
Assuming $|\omega|<\omega^*$ and recalling that $\bar u$ is defined in \eqref{eq:HP_f_base}, we have:
\begin{enumerate}
\item\label{i:1} any positive solution $u$ of \eqref{eq:u} admits a unique maximum point 
$x_m\in(-\alpha,\alpha)$, 
with 
\[
u'>0\text{ on }[-\alpha,x_m),\qquad 
u'<0\text{ on }(x_m,\alpha],\qquad 
u(x_m) < \bar u;
\]
\item\label{i:2} $u^*(x)\equiv \bar u$ is a supersolution of \eqref{eq:u};
\item\label{i:3} there exists $\eps>0$ such that $u_* := \eps \varphi_{\omega}$ is a subsolution of \eqref{eq:u} 
and $u_*\le u^*$ in $[-\alpha,\alpha]$.
\end{enumerate}
\end{lemma}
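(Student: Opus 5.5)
For item \ref{i:1} I would argue by ODE phase-plane considerations. First, $u'(\pm\alpha)\neq0$: if $u'(-\alpha)=0$, then $u(-\alpha)=0$ and uniqueness for the Cauchy problem (since $f\in C^1$ and $f(0)=0$) would force $u\equiv0$. Hence $u'(-\alpha)>0$ and $u'(\alpha)<0$ by positivity. Next I show that every critical point $x_0\in(-\alpha,\alpha)$ satisfies $u(x_0)<\bar u$ and is a strict local maximum. At a critical point we have $u''(x_0)=-f(u(x_0))$. If $u(x_0)=\bar u$, uniqueness again gives $u\equiv\bar u$, which contradicts the boundary values. So it remains to rule out $u(x_0)>\bar u$ and $u(x_0)<\bar u$ being a minimum. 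For this I look at the global maximum point $x_m$, where $u''(x_m)\le0$, so $f(u(x_m))\ge0$ and hence $u(x_m)\le\bar u$; by the previous remark $u(x_m)<\bar u$. Therefore $0<u<\bar u$ on $(-\alpha,\alpha)$, and in particular $f(u)>0$ there. Consequently, at any critical point $u''=-f(u)<0$, so every critical point is a strict local maximum. Since two strict local maxima would have a local minimum between them, there is exactly one critical point $x_m$. Combined with $u'(-\alpha)>0>u'(\alpha)$ and continuity of $u'$, this gives the stated monotonicity.

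Item \ref{i:2} is immediate: $-(\bar u)''-\omega(\bar u)'-f(\bar u)=0$, and $\bar u\ge0$ on the boundary.

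For item \ref{i:3} I use the explicit eigenpair. Set $u_*=\eps\varphi_\omega$; then
\[
-u_*''-\omega u_*'-f(u_*)=\mu_\omega u_* + g(u_*) = u_*\left(\mu_\omega+\frac{g(u_*)}{u_*}\right).
\]
By \eqref{eq:mu<0} we have $\mu_\omega<0$. Since $g(s)/s\to g'(0)=0$ as $s\to0^+$, we can choose $\eps$ small so that $g(s)/s<|\mu_\omega|$ for $0<s\le\eps\max\varphi_\omega$. With this choice the expression above is $\le0$, while $u_*=0$ on $\pm\alpha$, so $u_*$ is a subsolution. Reducing $\eps$ further if needed also ensures $\eps\max\varphi_\omega\le\bar u$, i.e.\ $u_*\le u^*$.

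The only delicate point is item \ref{i:1}, specifically excluding interior critical points with $u\ge\bar u$ or local minima. The argument via the global maximum and Cauchy uniqueness handles both cases cleanly, and the drift term $\omega u'$ plays no role because it vanishes at critical points.
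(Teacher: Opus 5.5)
Your proof is correct and follows essentially the same route as the paper. You use Cauchy uniqueness to exclude $u(x_m)=\bar u$, the sign $f(u(x_m))=-u''(x_m)\ge 0$ to get $u(x_m)<\bar u$ and hence that every critical point is a strict local maximum, and the explicit eigenpair with $\mu_\omega<0$ and $g(s)=o(s)$ for the subsolution. You also spell out details the paper leaves implicit, such as $u'(\pm\alpha)\neq 0$ and ruling out two maxima via an intermediate minimum.
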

\begin{proof}
Denote with $u$ a positive solution of \eqref{eq:u} and with 
$x_m\in(-\alpha,\alpha)$ a maximum point of $u$. By uniqueness of the solution of the Cauchy 
problem for ODEs, one can easily infer $u(x_m) \neq \bar u$. Since 
\[
0\le -u''(x_m) - \omega u'(x_m) = f(u(x_m)),
\end{equation*}
from assumption \eqref{eq:HP_f_base} we deduce that  $u(x_m) < \bar u$; then, again by \eqref{eq:HP_f_base}, 
any other possible critical point of $u$ is a strict local maximum, and \ref{i:1} follows.

Next, \ref{i:2} follows by definition. Finally, to prove \ref{i:3}, we observe that 
$u_* := \eps \varphi_{\omega}$ is a subsolution if and only if
\begin{equation*}
 (\mu_\omega+a) u_* - f(u_*)= \mu_\omega u_* + g(u_*)\le 0\qquad
 \text{in }(-\alpha,\alpha).
\end{equation*}
Now, since $\mu_\omega<0$ by \eqref{eq:mu<0}, \eqref{eq:f_g} implies the existence of 
$s_0>0$ such that  
\[
g(s) \le -\frac{\mu_\omega}{2}s\qquad\text{ for }0\le s \le s_0.
\]
Then, to conclude the proof it is enough to take
\[
0<\eps \le \frac{\min(s_0,\bar u)}{\|\varphi_{\omega}\|_\infty}.\qedhere
\]
\end{proof}
Next, motivated by the first point of the previous lemma, we define 
\[
M:=\max_{[0,\bar u]} -f' = -f'(\bar u) 
\]
and
\[
T\from \{v\in C([-\alpha,\alpha]):0\le v \le \bar u \} \to C^2_0([-\alpha,\alpha])=:
\{v\in C^2([-\alpha,\alpha]):v(\pm\alpha)=0 \},
\]
in such a way that $w=T(v)$ if and only if 
\begin{equation*}%\label{eq:fixedpoint}
        \begin{cases}
            -w''-\omega w' + M w  = f(v) + Mv =: h(v)  & \text{in $(-\alpha,\alpha)$}\\
            w(-\alpha)=w(\alpha)=0.
        \end{cases}
\end{equation*}
Using e.g.~Lax-Milgram theorem and regularity theory (or representation with the Green function), it is not difficult to check that $T$ is well-defined, continuous, and that there exists a constant $C>0$ such that
\begin{equation}\label{eq:reg_T}
v\in C([-\alpha,\alpha]),\quad 0\le v \le \bar u 
\qquad\implies\qquad
\begin{cases}
\|T(v)\|_{C^{2}} \le C \smallskip\\
\|T(v)\|_{C^{3}} \le C \|v\|_{C^{1}} \text{ if }v\in C^1\text{, too.} 
\end{cases}
\end{equation}
Moreover, $h'(s)=f'(s)+M \ge 0$, i.e.~$h$ is increasing in $s$, and $u$ solves 
\eqref{eq:u} if and only if it is a fixed point of $T$.

\begin{lemma}\label{L1}
For any $v,w\in C([-\alpha,\alpha])$ with $0\le v,w \le \bar u$. 
Then:
\begin{enumerate}
\item if $v \le w$ then $T(v) \le T(w)$;
\item if $v$ is a subsolution of \eqref{eq:u} then $T(v) \ge v$ and $T(v)$ is a subsolution;
\item if $w$ is a supersolution of \eqref{eq:u} then $T(w) \le w$ and $T(w)$ is a supersolution.
\end{enumerate}
\end{lemma}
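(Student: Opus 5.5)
The plan is to prove all three items from the weak maximum principle for the operator
\[
L w := -w'' - \omega w' + M w
\]
on $(-\alpha,\alpha)$ with homogeneous Dirichlet data. Since $M\ge 0$, this maximum principle applies: if $Lz\ge0$ in $(-\alpha,\alpha)$ and $z(\pm\alpha)\ge0$, then $z\ge0$. To see this, at a negative interior minimum we would have $z'=0$ and $z''\ge0$, hence $Lz\le Mz<0$ when $M>0$. If $M=0$, one uses instead the standard argument for $-z''-\omega z'\ge0$, for instance multiplying by $e^{\omega x}$ to write the operator in divergence form as $-(e^{\omega x}z')'\ge0$, so that $z$ is concave-like and cannot have a negative interior minimum. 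The only other ingredient is the monotonicity of $h(s)=f(s)+Ms$ on $[0,\bar u]$, already noted in the text.

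\emph{Item (1).} If $v\le w$, then $h(v)\le h(w)$ pointwise. Hence $z:=T(w)-T(v)$ satisfies $Lz=h(w)-h(v)\ge0$ with $z(\pm\alpha)=0$, and the maximum principle gives $z\ge 0$.

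\emph{Item (2).} Fix the notion of subsolution as a function $v$ (here $C^2$, such as $\eps\varphi_\omega$) with $-v''-\omega v'\le f(v)$ in $(-\alpha,\alpha)$ and $v(\pm\alpha)\le 0$, hence $v(\pm\alpha)=0$ since $v\ge0$. Adding $Mv$ to both sides gives $Lv\le h(v)=L\,T(v)$. So $z:=T(v)-v$ satisfies $Lz\ge0$ and $z(\pm\alpha)\ge0$, and therefore $T(v)\ge v$. Next set $w_1:=T(v)$. Then
\[
-w_1''-\omega w_1' = h(v)-Mw_1 = f(v)+M(v-w_1).
\]
We need this to be at most $f(w_1)$, that is, $h(v)\le h(w_1)$. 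This holds by monotonicity of $h$, since $v\le w_1$.

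One must check that $w_1$ stays in the admissible range $[0,\bar u]$ so that $h$ is monotone there. Since $v\ge0$ we get $w_1\ge v\ge 0$. For the upper bound, use $v\le \bar u$ and item (1) to get $w_1=T(v)\le T(\bar u)$. Then $T(\bar u)\le\bar u$, because $L\bar u=M\bar u=h(\bar u)$ and $\bar u\ge 0$ on the boundary.

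\emph{Item (3).} This is symmetric to item (2). For a supersolution $w$ we have $Lw\ge h(w)=L\,T(w)$ and $w(\pm\alpha)\ge0$, so $T(w)\le w$. Moreover $T(w)\ge T(0)=0$ by item (1), since $h(0)=0$. Then $-T(w)''-\omega T(w)'=h(w)-MT(w)\ge h(T(w))-MT(w)=f(T(w))$, so $T(w)$ is a supersolution.

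The main obstacle is bookkeeping rather than analysis. One has to make sure every function fed into $h$ lies in $[0,\bar u]$, where $h'\ge0$ is guaranteed by the choice $M=\max_{[0,\bar u]}(-f')$. One also has to handle the boundary inequalities in the definition of sub- and supersolutions, which is done through the maximum principle with nonhomogeneous boundary signs.
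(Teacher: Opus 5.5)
Your proposal is correct and follows essentially the same route as the paper. The paper's proof simply invokes the weak maximum principle for $-\varphi''-\omega\varphi'+M\varphi$ with Dirichlet conditions, together with the monotonicity of $h$ on $[0,\bar u]$; you have written out exactly those steps, including the check that $T(v)$ and $T(w)$ stay in $[0,\bar u]$, which the paper leaves implicit.
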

\begin{proof}
All these facts are direct consequences of the already mentioned monotonicity properties of $h$, 
and of the fact that the operator
\[
\varphi \mapsto  - \varphi'' -\omega \varphi' + M \varphi 
\]
with Dirichlet boundary conditions in $(-\alpha,\alpha)$ enjoys the weak maximum principle.
\end{proof}

\begin{proof}[Proof of Proposition \ref{prop:main_both} -- Existence]
Assuming $|\omega|<\omega^*$ and recalling the definitions of $u_*$, $u^*$ from Lemma 
\ref{lem:apriori+subsup}, we define two sequences:
 \begin{equation*}
        \begin{cases}
            v_0 = u_*\\
            v_{n+1}=T(v_n),
        \end{cases}
\qquad
        \begin{cases}
            w_0 = u^*\\
            w_{n+1}=T(w_n).
        \end{cases}
 \end{equation*}
By induction, using Lemma \ref{L1}, we have that  $(v_n)_n$ is a (pointwise) increasing sequence of 
subsolutions, $(w_n)_n$ a decreasing sequence of supersolutions and, for every $n$,
\begin{equation*}%\label{order_v_w}
0 \le v_{n} \le w_{n} \le \bar{u}.
\end{equation*}
Since $v_n$ is increasing and bounded, there exists a limit $\hat{u}$ such that 
$v_n \to \hat{u}$ pointwise. Recalling \eqref{eq:reg_T}, we obtain that 
$(v_n)_n = (T(v_{n-1}))_n$ is uniformly bounded in $C^{2}$, and then in $C^{3}$. By Ascoli-Arzelà 
theorem, from any subsequence we can extract a further subsequence converging to $\hat u$ in 
$C^{2}$. Finally, since this is true for any subsequence, we have that the full sequence converges 
to $\hat u$ in $C^{2}$. By continuity of $T$, we can pass to the limit in the relation $v_{n+1}=T(v_n)$, 
obtaining that $\hat{u} \in C^{2}_0([-\alpha,\alpha])$ satisfies $T(\hat{u})=\hat{u}$, thus $\hat{u}$ is a solution of \eqref{eq:u} (and actually $\hat{u}$ is of class $C^3$). 
\end{proof}

\begin{proof}[Proof of Proposition \ref{prop:main_both} -- Uniqueness]
Take $u$, $v$ positive solutions of \eqref{eq:u}. Notice that, by Hopf lemma 
(or simply by uniqueness of the solution of the Cauchy problem), we have that 
$su < v$ in $(-\alpha,\alpha)$ for $s>0$ sufficiently small, and $su > v$ in $(-\alpha,\alpha)$ for 
$s$ sufficiently large. Let us define
\[
\bar s = \sup \{s>0: su < v \text{  in }  (-\alpha,\alpha) \}\in(0,\infty);
\] 
then $\bar s u$ touches $v$ from below, in the sense that $su \le v$, and there exists 
$\bar x \in [-\alpha,\alpha]$ such that
\[
\bar s u (\bar x) = v (\bar x),\qquad \bar s u' (\bar x) = v' (\bar x).
\]
Let us assume by contradiction  that $0<\bar s < 1$. Then \eqref{eq:HP_f_monoto} implies that 
$w = \bar s u$ is a strict subsolution of \eqref{eq:u}, since in such case
\[
 -w'' -\omega w' = \bar s f(u) = \bar s u\cdot \frac{f(u)}{u} < \bar s u\cdot \frac{f(\bar s u)}{\bar s u} = f(w);
\]
since  $\bar s u$ touches $v$ from below, we obtain a contradiction with the strong maximum principle. 

Summarizing, we have proved that $\bar s\ge1$, that is $u \ge v$ in $(-\alpha,\alpha)$; exchanging the 
role of $u$ and $v$ we conclude.  
\end{proof}

The existence and uniqueness of the solution of \eqref{eq:u} that we just proved allows to define the 
map 
\[
\left(-\omega^*,\omega^*\right) \ni \omega\mapsto u(\cdot,\omega)
\]
Moreover, by uniqueness and direct computations, it immediately follows 
\[
u(x,-\omega) = u(-x,\omega).
\]
We are left to prove the regularity of the map  $\omega\mapsto u(\cdot,\omega)$. 
This will follow by applying the implicit function theorem to the map:
\begin{equation*}
    F : C_0^{2}([-\alpha,\alpha]) \times \mathbb{R} \to C([-\alpha,\alpha]),\end{equation*}
\begin{equation*}
F(\omega,u)=
      u''  +\omega u' + f(u)
\end{equation*}
Of course, $F$ is of class $C^1$ by the assumptions on $f$ (notice that this would not be the case taking as domain $C^3_0$ and target $C^1$, as $f$ is only $C^1$); to conclude, we need to check that
$\partial_{u} F(\omega,u)$ is invertible between the appropriate spaces, where the pair 
$(\omega,u)$ is the unique solution of \eqref{eq:u} for $\omega$ fixed.

Since 
\begin{equation*}
    \partial_{u} F(\omega,u)[v] = 
        v'' + \omega v' +  f'(u)v,
        \qquad \text{for every }v\in C_0^{2}([-\alpha,\alpha]),
\end{equation*}
by the Fredholm alternative we have that the invertibility of  $\partial_{u} F(\omega,u)$ is equivalent 
to its injectivity. Then the end of the proof of Proposition \ref{prop:main_both} will follow from 
the next lemma.
\begin{lemma}\label{lem:lin_injective}
The problem 
\begin{equation}\label{eq:lin_inj}
    \begin{cases}%[left=\empheqlbrace]
        -w'' -\omega w' -f'(u)w = 0 & \text{in }(-\alpha,\alpha) \\
        w(-\alpha) = w(\alpha) =0
    \end{cases}
\end{equation}
only admits the trivial solution $w\equiv 0$.
\end{lemma}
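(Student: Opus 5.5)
The plan is to compare $w$ with $u$ itself, which is a positive solution of the linear equation with potential $f(u)/u$ in place of $f'(u)$. By \eqref{eq:HP_f_standardKPP} we have $f'(u)<f(u)/u$ in $(-\alpha,\alpha)$, since $0<u<\bar u$ there by Lemma \ref{lem:apriori+subsup}. So $u$ is a strict positive supersolution of the linearized operator $L w := -w''-\omega w' - f'(u)w$:
\[
Lu = f(u) - f'(u)u = u\Big(\frac{f(u)}{u}-f'(u)\Big) > 0 \quad\text{in }(-\alpha,\alpha).
\]
By the classical characterization, a positive strict supersolution vanishing on the boundary forces the principal Dirichlet eigenvalue of $L$ to be positive, and then $0$ cannot be an eigenvalue. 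I would make this concrete with a touching/sliding argument of the same type as in the uniqueness proof above.

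First, multiply by $e^{\omega x}$ so that $L$ is in divergence form: $-(e^{\omega x}w')' - e^{\omega x}f'(u)w=0$, and likewise $-(e^{\omega x}u')' - e^{\omega x}f'(u)u = e^{\omega x}(f(u)-f'(u)u)>0$.

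Suppose $w\not\equiv0$ solves \eqref{eq:lin_inj}. The simplest route is a Picone/Wronskian-type identity. Set $W = e^{\omega x}(w'u - wu')$. A direct computation gives
\[
W' = e^{\omega x}\, w\,(f(u)-f'(u)u).
\]
Let $(x_1,x_2)\subseteq(-\alpha,\alpha)$ be a nodal interval of $w$; it exists because zeros of a nontrivial solution are simple, hence isolated. Take $w>0$ there. Integrating $W'$ over $[x_1,x_2]$ gives something strictly positive. On the other hand, at the endpoints $w=0$, so $W(x_2)-W(x_1) = e^{\omega x_2}w'(x_2)u(x_2) - e^{\omega x_1}w'(x_1)u(x_1)$. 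Here $w'(x_2)\le 0$, $w'(x_1)\ge 0$ and $u\ge0$, so this quantity is $\le0$, a contradiction.

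The main subtlety is the boundary case $x_i=\pm\alpha$, where $u=0$. There $W(\pm\alpha)=0$ directly, so the argument still gives $0<\int W' = 0$, again a contradiction. Thus no nontrivial solution exists. Beyond that, the only check is the strict sign $f(u)-f'(u)u>0$ on the open interval, which needs $0<u<\bar u$ from Lemma \ref{lem:apriori+subsup} together with \eqref{eq:HP_f_standardKPP}.
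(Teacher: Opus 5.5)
Your proof is correct and follows essentially the same route as the paper: the paper tests the linearized equation with $e^{\omega x}u$ and the equation for $u$ with $e^{\omega x}w$ on a nodal interval (specifically the first one, $(-\alpha,\bar x)$), which is exactly integrating your Wronskian identity $W'=e^{\omega x}w\,(g'(u)u-g(u))$. As in your argument, the contradiction comes from the boundary term having the wrong sign against the strict KPP inequality \eqref{eq:HP_g_standardKPP}.
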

\begin{proof}
Assume that $w$ is a nontrivial solution of \eqref{eq:lin_inj}; to obtain a contradiction, we apply a 
Sturm-Picone argument. By uniqueness of the solution of the Cauchy problem we have that $w$ has only 
simple zeroes. As a consequence, up to a change of sign, we can assume without loss of generality 
that, for some $-\alpha<\bar x \le \alpha$,
\[
w>0\text{ in }(-\alpha,\bar x), \qquad w(\bar x) = 0 ,\qquad w'(\bar x) < 0.
\] 
Testing \eqref{eq:lin_inj} with $e^{\omega x} u$ and \eqref{eq:u}  with $e^{\omega x} w$ on 
$(-\alpha,\bar x)$ we obtain
\[
\int_{-\alpha}^{\bar x} e^{\omega x}f'(u)uw = - e^{\omega \bar x} u(\bar x)w'(\bar x) +\int_{-\alpha}^{\bar x} e^{\omega x}u'w' \ge 
\int_{-\alpha}^{\bar x} e^{\omega x}f(u)w, 
\]
whence, by assumption \eqref{eq:HP_g_standardKPP},
\begin{equation*}
0\ge  \int_{-\alpha}^{\bar x} e^{\omega x} [ g'(u)u - g(u) ] w> 0 ,
\end{equation*}
a contradiction.
\end{proof}
As a byproduct of the last lemma, we have that the linearized operator satisfies both the weak and the strong maximum principle.
\begin{corollary}\label{coro:mp}
Let $u$ as above, and assume that $-\alpha\le x_1 < x_2 \le \alpha$ and that $w$ satisfies
\begin{equation*}%\label{eq:lin_inj}
    \begin{cases}%[left=\empheqlbrace]
        -w'' -\omega w' -f'(u)w \ge 0 & \text{in }(x_1,x_2) \\
        w(x_1)\ge0, \qquad w(x_2) \ge0.
    \end{cases}
\end{equation*}
Then
\[
\text{either }\qquad w\equiv0\qquad \text{ or }\qquad w>0\qquad\text{ in }(x_1,x_2),
\]
and, in the latter case, $w(x_i)=0$ implies $w'(x_i)\neq0$, $i=1,2$.
\end{corollary}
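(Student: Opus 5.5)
Corollary \ref{coro:mp} says that the linearized operator $L w := -w'' - \omega w' - f'(u)w$ satisfies both the weak and the strong maximum principle on every subinterval $(x_1,x_2)\subseteq(-\alpha,\alpha)$. The plan is to build a positive strict supersolution of $L$ from $u$ itself, and then run the classical argument that a positive supersolution yields the maximum principle (Protter--Weinberger style, via the quotient $w/\psi$).

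\textbf{Positive supersolution.} Set $\psi = u$. Using $-u''-\omega u' = f(u)$, we get
\[
L u = f(u) - f'(u)u = g'(u)u - g(u) > 0 \quad\text{in }(-\alpha,\alpha),
\]
by \eqref{eq:HP_g_standardKPP} together with $0<u<\bar u$ from Lemma \ref{lem:apriori+subsup}. Since $u$ vanishes at $\pm\alpha$, this works only in the interior. To cover the case $x_1=-\alpha$ or $x_2=\alpha$, I would work first on a slightly larger interval, or handle the endpoints directly as described below.

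\textbf{Weak maximum principle.} Suppose $w$ is negative somewhere in $(x_1,x_2)$. Consider
\[
\bar t = \inf\{t>0 : w + t u \ge 0 \text{ on } [x_1,x_2]\}.
\]
Finiteness of $\bar t$ is the delicate point when $x_i=\pm\alpha$, since $u$ vanishes there. Since $w(x_i)\ge 0$, the only problem is at an endpoint where $w(x_i)=0=u(x_i)$. There one compares derivatives: $u'(x_i)\ne 0$ by Hopf (Lemma \ref{lem:apriori+subsup}(i) gives $u'(-\alpha)>0$ and $u'(\alpha)<0$). Hence $w+tu\ge 0$ near the endpoint for $t$ large, and $\bar t<\infty$.

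Then $z = w + \bar t u \ge 0$ touches zero, either at an interior point or tangentially at an endpoint, with $z(x_i)=0$ and $z'(x_i)=0$. Moreover
\[
Lz \ge \bar t\,(g'(u)u-g(u)) > 0.
\]
At an interior touching point we have $z=0$, $z'=0$ and $z''\ge 0$, so $Lz\le 0$, a contradiction. At a boundary tangential touch, $z(x_i)=z'(x_i)=0$ with $z\ge 0$ forces $z''(x_i)\ge 0$, so $Lz(x_i)\le 0$. On the other hand $Lz(x_i) \ge \bar t(g'(0)\cdot 0 - g(0)) = 0$, so this is not strictly contradictory. This boundary degeneracy is the main obstacle.

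To handle it, I would instead rewrite in the form $-(e^{\omega x}z')' \ge e^{\omega x} f'(u) z$ and argue as follows. Near the endpoint, $z\ge 0$ is small, and $z(x_i)=z'(x_i)=0$. Since $f'$ is bounded, Gronwall-type ODE uniqueness for the differential inequality implies $z\equiv 0$ near $x_i$. Then $w = -\bar t u <0$ there, contradicting that $w$ satisfies $Lw\ge0$ together with the strict inequality $Lu>0$: we would get $Lw = -\bar t Lu < 0$.

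\textbf{Strong maximum principle.} Once $w\ge 0$ is known, write $Lw + f'(u)^- w \ge -f'(u)^+ w$. An alternative is the standard ODE argument: if $w(x_0)=0$ at an interior point, then $w'(x_0)=0$, and Gronwall applied to the inequality $(e^{\omega x}w')' \le C e^{\omega x} w$ with nonnegative $w$ yields $w\equiv 0$ on $[x_1,x_2]$. The same Gronwall argument at an endpoint with $w(x_i)=0$ and $w'(x_i)=0$ gives $w\equiv0$. Therefore, if $w\not\equiv0$, we get $w>0$ inside and $w'(x_i)\ne0$ at any endpoint where $w$ vanishes, which is the statement of the corollary.
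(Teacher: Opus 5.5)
Your proof is correct, but it takes a different route from the paper's.

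\textbf{The paper's route.} The paper goes through the principal eigenvalue $\Lambda$ of $L$ on $(-\alpha,\alpha)$. Plugging $u$ into the sup--inf characterization (as in \cite{MR1258192}) gives $\Lambda\ge\inf_{(-\alpha,\alpha)}(g'(u)u-g(u))/u=0$. The infimum is $0$ exactly because $u$ vanishes at $\pm\alpha$, which is the same boundary degeneracy you ran into. Lemma \ref{lem:lin_injective}, the Sturm--Picone argument, is then used to exclude $\Lambda=0$. Positivity of $\Lambda$, hence on every subinterval, gives the weak and strong maximum principle and the Hopf property by the general theory.

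\textbf{Your route.} You use $u$ directly as a positive strict supersolution in the interior and slide $w+tu$.
\begin{itemize}
\item Finiteness of $\bar t$ follows from $w\in C^1$ up to the boundary together with $u'(-\alpha)>0>u'(\alpha)$.
\item Minimality of $\bar t$ correctly forces a contact point where $z=z'=0$.
\item The degenerate boundary contact is handled by Cauchy uniqueness for the one-sided inequality $(e^{\omega x}z')'\le Ce^{\omega x}z$ with $z\ge0$. This step is valid: integrating twice from the contact point gives $Z'\le KZ$ for $Z(x)=\int_{x_i}^x z$, so $Z\equiv0$.
\end{itemize}

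\textbf{Simplifications.} The same Gronwall step also kills an interior contact, with $z\equiv0$ contradicting $Lz\ge\bar t\,(g'(u)u-g(u))>0$. So you can drop the separate second-derivative argument and treat all contact points uniformly. The ``slightly larger interval'' idea should be discarded outright, since $u$ is not a positive supersolution beyond $\pm\alpha$. The half-sentence with $f'(u)^\pm$ should also go.

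\textbf{What each approach buys.} Your route is elementary and self-contained in one dimension. It needs neither eigenvalue theory nor Lemma \ref{lem:lin_injective}; it actually reproves that lemma, by applying the weak principle to $\pm w$. It also makes explicit where strictness comes from: interior strict supersolution plus Hopf for $u$ at $\pm\alpha$. The paper's route is shorter because Lemma \ref{lem:lin_injective} is needed anyway for the implicit function theorem. It also ties the statement conceptually to the sign of the principal eigenvalue.
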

\begin{proof}
By the Hopf lemma and the strong maximum principle, it is well-known that a necessary and sufficient condition for the claim is that the principal 
eigenvalue $\Lambda$ of the operator
\[
w \mapsto -w'' -\omega w' -f'(u)w,
\]
with Dirichlet boundary conditions on $(-\alpha,\alpha)$, is strictly positive, see e.g.~\cite{MR1258192}. On the one hand, we have: 
\begin{equation*}
\Lambda = \sup_{\phi>0} \inf_{(-\alpha,\alpha)} \frac{- \phi'' -\omega \phi' -f'(u) \phi}{\phi} \ge \inf_{(-\alpha,\alpha)} \frac{- u'' -\omega u' -f'(u) u}{u} =  \inf_{(-\alpha,\alpha)} \frac{g'(u)u - g(u)}{u} =0
\end{equation*}
(recall that $g$ is convex). On the other hand, Lemma \ref{lem:lin_injective} implies that 
$\Lambda \neq 0$, and the corollary follows.
\end{proof}

\section{Global inversion}\label{sec:invertibilty}

In view of Proposition \ref{prop:main_both}, throughout this section we denote by $u=u(\cdot,\omega) \in 
C^{2}_0([-\alpha,\alpha])$ the unique positive solution of \eqref{eq:u}, where the dependence on 
$\omega$, $|\omega|<\omega^*$, is of class $C^1$. Accordingly, the map
\begin{equation}\label{eq:lambda(omega)}
\lambda(\omega) := -\frac{u'(-\alpha,\omega)}{u'(\alpha,\omega)}>0
\end{equation}
is well-defined, by Lemma \ref{lem:apriori+subsup}, and of class $C^1$ too. With this notation, we have that $u(\cdot,\omega)$ solves the overdetermined 
problem \eqref{eq:u}, \eqref{eq:lambda}, with asymmetry parameter $\lambda=\lambda(\omega)$. 
Consequently, the proof of our main results boils down to the invertibility of the map 
$\omega\mapsto\lambda(\omega)$. In the following, when no confusion arises, we will systematically 
omit the explicit dependence on $\omega$, which is always understood. 

By differentiating, we obtain that
\begin{equation}\label{eq:def_v}
v:=\frac{\partial}{\partial \omega} u \in C^{2}([-\alpha,\alpha])
\quad\text{satisfies }
\qquad
\begin{cases}
-v'' - \omega v' -f'(u)v=u'\\
v(-\alpha) = v(\alpha) = 0,
\end{cases}
\end{equation}
while
\begin{equation}\label{eq:lambda_dot}
\dot \lambda :=\frac{d}{d \omega} \lambda
\quad\text{satisfies }
\qquad
v'(-\alpha) + \lambda v'(\alpha) + \dot \lambda u'(\alpha) = 0.
\end{equation}
Moreover,  \eqref{eq:symm_u} yields
\begin{equation}\label{eq:symm_lambda}
\lambda(-\omega) := -\frac{u'(-\alpha,-\omega)}{u'(\alpha,-\omega)} = 
-\frac{-u'(\alpha,\omega)}{-u'(-\alpha,\omega)} = \frac{1}{\lambda(\omega)},\qquad\qquad
\dot\lambda(-\omega)=\frac{\dot\lambda(\omega)}{\lambda(\omega)^2}.
\end{equation}

The main result of this section is the following.
\begin{proposition}\label{prop:lambda_dot>0}
For every $-\omega^*<\omega<\omega^*$, we have $\dot\lambda(\omega)>0$ (see Figure \ref{fig:om to la}).
\end{proposition}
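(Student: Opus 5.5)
The plan is to turn $\dot\lambda>0$ into a sign condition on the boundary derivatives of $v=\partial_\omega u$, and then prove that sign with the maximum principle of Corollary \ref{coro:mp} and suitable test functions. Since $u'(\alpha)<0$ by Lemma \ref{lem:apriori+subsup}, \eqref{eq:lambda_dot} gives
\[
\dot\lambda=-\frac{v'(-\alpha)+\lambda v'(\alpha)}{u'(\alpha)},\qquad\text{so}\qquad \dot\lambda>0\iff Q:=v'(-\alpha)+\lambda v'(\alpha)>0 .
\]
By the symmetry \eqref{eq:symm_lambda} it is enough to treat $\omega\ge0$, although the argument should not need this.

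The first tool is a family of auxiliary functions. Write $L w:=-w''-\omega w'-f'(u)w$. Differentiating \eqref{eq:u} gives $Lu'=0$, and $u''(\pm\alpha)=-\omega u'(\pm\alpha)$ because $f(0)=0$. So for every $r\in\R$ the function $h_r:=v+ru'$ satisfies
\[
Lh_r=u',\qquad h_r(\pm\alpha)=r\,u'(\pm\alpha).
\]
Moreover
\[
h_r'(-\alpha)+\lambda h_r'(\alpha)=Q-r\omega\bigl(u'(-\alpha)+\lambda u'(\alpha)\bigr)=Q,
\]
so the quantity $Q$ is the same for every $h_r$. The source term $u'$ is positive on $(-\alpha,x_m)$ and negative on $(x_m,\alpha)$. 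Hence on each of the two monotonicity intervals, $\pm h_r$ is a supersolution to which Corollary \ref{coro:mp} applies, provided the signs at the endpoints are right. The freedom in $r$ will be used to arrange those signs. The natural choice is $r$ with $h_r(x_m)=0$, that is $r=-v(x_m)/u'(x_m)$, which does not make sense since $u'(x_m)=0$. So instead I will pick $r$ so that $h_r$ vanishes at the maximum point and compare the two pieces.

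The second tool is a pair of weighted Green identities. $L$ is symmetric with respect to the weight $e^{\omega x}$. Testing $Lv=u'$ against $e^{\omega x}u'$ gives
\[
\int_{-\alpha}^{\alpha}e^{\omega x}(u')^2=-u'(\alpha)\bigl(\lambda e^{-\omega\alpha}v'(-\alpha)+e^{\omega\alpha}v'(\alpha)\bigr)>0,
\]
which already controls one positive combination of $v'(\pm\alpha)$. Testing against $e^{\omega x}u$, using $Lu=g(u)-g'(u)u$, gives a second relation:
\[
\int e^{\omega x}\bigl(uu'+(g'(u)u-g(u))v\bigr)=0 .
\]
The plan is to combine these two identities with sign information on $v$, namely $v$ changes sign exactly once. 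That information would come from the $h_r$ argument through Corollary \ref{coro:mp}. The combination should then show that $Q$ is a positive combination of positive terms. Heuristically this is the expected mechanism: increasing $\omega$ pushes mass to the left, which steepens $u'(-\alpha)$ and flattens $u'(\alpha)$.

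The main obstacle is the sign lemma. I need to show that a suitable $h_r$ has a definite sign on each side of a single crossing point, and that this point is positioned so that the weights $e^{\mp\omega\alpha}$ from the first identity can be traded for the weights $1,\lambda$ in $Q$. This is where I expect to need the enhanced KPP hypothesis \eqref{eq:HP_g_enhancedKPP}: the function $(g'(u)u-g(u))/u$ is monotone in $u$. This monotonicity should let me compare the potential $f'(u)$ on the two sides of $x_m$ at equal heights of $u$, and hence compare the two one-sided problems for $h_r$ through a Sturm--Picone argument like the one in Lemma \ref{lem:lin_injective}. I would first try parametrizing each monotone branch of $u$ by the height $s=u(x)$. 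In these coordinates the left and right branches become comparable ODEs in $s$, and the signs of the corresponding $h_r$ could be read off by a touching argument.
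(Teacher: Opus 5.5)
Your reduction to $Q:=v'(-\alpha)+\lambda v'(\alpha)>0$ is correct, and so is the invariance of $Q$ under $v\mapsto v+ru'$. However, the plan puts the difficulty in the wrong place, and the tools you propose cannot handle the hard case.

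\begin{itemize}
\item If $v$ changes sign exactly once, Hopf already gives $v'(\pm\alpha)>0$, so $Q>0$ follows with no Green identities at all. The hard case is when $v$ has a fixed sign. For $\omega$ near $\omega^*$ (at least when $g''(0)>0$, see Remarks \ref{rmk:cases_v} and \ref{rmk:altern_2}) one has $v\approx u^*/\hat B<0$ on all of $(-\alpha,\alpha)$, with $v'(-\alpha)\le0$. So the input you plan to feed into the identities, namely that \emph{$v$ changes sign exactly once}, is false in general.
\item The family $h_r=v+ru'$ cannot repair this. As you note yourself, $h_r(x_m)=v(x_m)$ for every $r$, so the $r$ you then say you will pick, making $h_r$ vanish at $x_m$, does not exist. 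Also, for $r\neq0$ you have $h_r(\pm\alpha)=r\,u'(\pm\alpha)\neq0$, so the Hopf part of Corollary \ref{coro:mp} tells you nothing about $h_r'(\pm\alpha)$.
\item The weighted identity with $e^{\omega x}u'$ controls $\lambda e^{-\omega\alpha}v'(-\alpha)+e^{\omega\alpha}v'(\alpha)$. Combined with $v'(-\alpha)\le0<v'(\alpha)$, this yields $Q>0$ only if $\lambda\ge e^{\omega\alpha}$. But for $\omega>0$ the reverse strict inequality holds. Multiply the equation for $e^{\omega x/2}u$ by its derivative, integrate, and use \eqref{eq:HP_g_standardKPP}. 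This gives $|(e^{\omega x/2}u)'(-\alpha)|<|(e^{\omega x/2}u)'(\alpha)|$, i.e.\ $\lambda(\omega)<e^{\omega\alpha}$. So the trade of weights you hope for goes the wrong way.
\item Your second identity has a sign slip: $Lu=g'(u)u-g(u)$, not $g(u)-g'(u)u$. With the correct sign, it is exactly what rules out $v>0$ when $\omega\ge0$.
\end{itemize}

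The missing idea is to perturb $v$ by a multiple of $u$ rather than of $u'$. Since $u'(-\alpha)+\lambda u'(\alpha)=0$, the quantity $Q$ is also invariant under $v\mapsto v+su$. Unlike $u'$, the function $u$ vanishes at $\pm\alpha$ and is positive at $x_m$.

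Take $s=-v(x_m)/u(x_m)$. Up to the factor $u(x_m)$, this gives $\tilde v=\partial_\omega\bigl(u/u(x_m)\bigr)$. This function has zero Dirichlet data and vanishes at $x_m$. The price is a new source term, $u'+s\,(g'(u)u-g(u))$.

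In the bad case $v(x_m)<0$, you must show this source is positive and then negative, with a single crossing in $(x_m,\alpha)$. This is exactly where \eqref{eq:HP_g_enhancedKPP} enters. Together with the log-concavity of $u$ (Lemma \ref{lem:z'}), it makes $u'/(g'(u)u-g(u))$ strictly decreasing on $[x_m,\alpha)$. The maximum-principle argument of Lemma \ref{lem:cases_v} then gives $\tilde v'(\pm\alpha)>0$, hence $Q>0$.

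Your height-parametrization and Sturm--Picone comparison of the two branches is too undeveloped to replace this step.
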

\begin{figure}
\centering
	\input{logistic_BW/omega_log_lambda.pgf}
\caption{numerical plot of the map $\omega\mapsto\lambda(\omega)$, in logarithmic scale ($\alpha=
\pi$, $f(s)=s-s^2$).}
\label{fig:om to la}
\end{figure}
\begin{remark}\label{rmk:after_prop2}
By the inversion theorem, the previous proposition implies that, for a suitable choice of the 
interval $I$, we can define the map 
\[
I\ni \lambda \mapsto \omega(\lambda) \in (-\omega^*,\omega^*),
\] 
which is $C^1$, strictly increasing and onto. This, together with Proposition \ref{prop:main_both}, will  
provide almost all our main results. The only missing part will be the identification of the interval
\[
I=\left(\lim_{\omega\to-\omega^*}\lambda(\omega),\lim_{\omega\to\omega^*}\lambda(\omega)\right)
\]
which is discussed in Section \ref{sec:bif}.  
\end{remark}

The rest of this section is devoted to the proof of Proposition \ref{prop:lambda_dot>0}. 
By \eqref{eq:lambda_dot}, and since $u'(\alpha)<0< \lambda$, for every relevant $\omega$, the proof of 
Proposition \ref{prop:lambda_dot>0} requires a refined study of the properties of $v$, and in particular of the 
sign of $v'$ at $\pm\alpha$. Due to \eqref{eq:symm_lambda}, we restrict our study to the case
\[
0\le \omega <\omega^*,
\]
as the proof in such interval will imply also that for $\omega<0$. 
As we will see, we have to take into account two possible regimes for $v$, depending on $\omega$. 

To start with, we draw some consequences of the maximum principle on $v$. To this aim, 
we recall from conclusion \ref{i:1} of Lemma \ref{lem:apriori+subsup} the sign properties of $u'$ (i.e.~the right hand side of \eqref{eq:def_v}): denoting with $x_m=x_m(\omega)\in(-\alpha,\alpha)$ the unique 
maximum point of $u$, we know that $u'>0$ in $[-\alpha,x_m)$ and $u'<0$ in $(x_m,\alpha]$. 
We obtain the following.
\begin{lemma}\label{lem:cases_v}
Assume that $0\le \omega <\omega^*$. Then one of the following alternatives holds true:
\begin{enumerate}
\item\label{i:v1} either there exists a unique $\bar x \in (-\alpha,\alpha)$ such that
\begin{equation*}
\begin{cases}
v > 0 & \text{in } (-\alpha,\bar x)\\
v < 0 & \text{in } (\bar x,\alpha),
\end{cases}
\qquad\text{ and }\quad  v'(\pm\alpha)>0;  
\end{equation*}
\item\label{i:v2} or $v<0$ in $(-\alpha,\alpha)$ and $v'(-\alpha)\le 0< v'(\alpha)$.
\end{enumerate}
\end{lemma}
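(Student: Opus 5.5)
The plan is to combine three ingredients: the sign structure of the right-hand side $u'$ from Lemma \ref{lem:apriori+subsup}, the maximum principle of Corollary \ref{coro:mp} applied on subintervals, and one integral identity, which is the only place where $\omega\ge0$ enters. Write $L w := -w''-\omega w' - f'(u)w = -e^{-\omega x}(e^{\omega x}w')' - f'(u)w$. Then $Lv = u'$, so $Lv>0$ on $[-\alpha,x_m)$ and $Lv<0$ on $(x_m,\alpha]$. Note also that $v\not\equiv0$, since $u'\not\equiv0$.

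\emph{Step 1: $v$ is negative somewhere.} Test $Lv=u'$ against $e^{\omega x}u$ and integrate by parts twice. The boundary terms vanish because $u(\pm\alpha)=v(\pm\alpha)=0$. Using $Lu = g'(u)u-g(u)$, one gets
\[
\int_{-\alpha}^{\alpha} e^{\omega x}\bigl(g'(u)u-g(u)\bigr)v
=\int_{-\alpha}^{\alpha} e^{\omega x}uu' = -\frac{\omega}{2}\int_{-\alpha}^{\alpha}e^{\omega x}u^2\le 0 .
\]
By \eqref{eq:HP_g_standardKPP} the weight $g'(u)u-g(u)$ is positive in $(-\alpha,\alpha)$. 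Hence $v\ge0$ is impossible, because $v$ would then be a nontrivial nonnegative function with nonpositive weighted integral. So $\{v<0\}\neq\emptyset$.

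\emph{Step 2: ordering of the nodal components.} Let $N$ be a connected component of $\{v<0\}$, say $N=(y_1,y_2)$. Suppose $N\subset(-\alpha,x_m)$, so that $v(y_1)=v(y_2)=0$ and $Lv\ge0$ on $N$. Corollary \ref{coro:mp} applied to $w=v$ on $(y_1,y_2)$ then forces $v>0$ or $v\equiv0$ there, a contradiction. Therefore every negative component meets $(x_m,\alpha]$. Symmetrically, applying the corollary to $w=-v$, every positive component meets $[-\alpha,x_m)$. Now suppose a negative component lies to the left of a positive one. The positive component has left endpoint $<x_m$, so the negative component would be contained in $(-\alpha,x_m)$, which we just excluded. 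Hence all positive components lie to the left of all negative components.

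It remains to show there is at most one component of each sign and no zero stretches in between. A degenerate interior zero ($v=v'=0$) or a zero interval is handled the same way: on the relevant side of $x_m$, the corollary applied between two consecutive zeros gives strict sign together with nondegenerate endpoint derivatives. This is the step I expect to need the most care. I would apply the corollary on the intervals $(-\alpha,\min(\bar x,x_m))$ and $(\max(\bar x,x_m),\alpha)$, and use the inhomogeneous equation to exclude a double zero at $\bar x$ itself. The outcome is that either $v>0$ on $(-\alpha,\bar x)$ and $v<0$ on $(\bar x,\alpha)$ for a single $\bar x$, or $v<0$ on all of $(-\alpha,\alpha)$, the latter because by Step 1 the positive set may be empty.

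\emph{Step 3: boundary derivatives.} Near $\alpha$ we have $v<0$ and $L(-v)=-u'>0$. The Hopf part of Corollary \ref{coro:mp}, applied to $-v$ on $(\max(\bar x,x_m),\alpha)$, gives $v'(\alpha)>0$ in both alternatives. In alternative \ref{i:v1}, $v>0$ near $-\alpha$ and $Lv>0$ there, so the corollary on $(-\alpha,\min(\bar x,x_m))$ gives $v'(-\alpha)>0$. In alternative \ref{i:v2}, $v<0$ near $-\alpha$ together with $v(-\alpha)=0$ yields only $v'(-\alpha)\le0$. This matches the statement, since the sign of $Lv$ there is wrong for Hopf.
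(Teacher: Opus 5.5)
Your proof is correct. It uses the same three ingredients as the paper: Corollary \ref{coro:mp} on subintervals cut at $x_m$, the weighted integral identity (the only place where $\omega\ge0$ enters), and Hopf at $\pm\alpha$. What differs is how the cases are organized.

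The paper splits according to the sign of $v(x_m)$:
\begin{itemize}
\item if $v(x_m)=0$, the corollary on $(-\alpha,x_m)$ and $(x_m,\alpha)$ concludes;
\item if $v(x_m)<0$, it first gets $v<0$ on $(x_m,\alpha)$ and takes $\bar x$ as the first zero to the left of $x_m$;
\item if $v(x_m)>0$, it takes the first zero to the right, and uses the integral identity only to rule out that no such zero exists.
\end{itemize}
You instead use the identity first, to get $\{v<0\}\neq\emptyset$, and then order the nodal components globally. This is in effect a direct proof of the principle in Remark \ref{rmk:changing_sign_rhs}.

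The advantage of the paper's choice is that strict sign on the stretch between $\bar x$ and $x_m$ holds by construction. In your version, that stretch is exactly what the intervals $(-\alpha,\min(\bar x,x_m))$ and $(\max(\bar x,x_m),\alpha)$ leave out. There the corollary cannot be applied with the sign you need: for $\bar x<x_m$ you have $v\le0$ but $Lv=u'>0$.

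The gap closes in one line each way:
\begin{itemize}
\item If $v(x_0)=0$ for some $x_0\in(\bar x,x_m]$ (including the case $\bar x=-\alpha$), apply the corollary to $v$ on $(\bar x,x_0)$. It gives either $v>0$ there, contradicting $v\le0$, or $v\equiv0$, contradicting $u'>0$.
\item If $\bar x>x_m$ and $v(x_0)=0$ for some $x_0\in[x_m,\bar x)$, apply the corollary to $-v$ on $(x_0,\bar x)$, where $L(-v)=-u'>0$.
\end{itemize}

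Finally, you do not need to exclude a double zero at $\bar x$. The lemma makes no claim about $v'(\bar x)$. Moreover, when $\bar x=x_m$ the equation would give no contradiction anyway, since $u'(x_m)=0$.
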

\begin{proof}
Let $x_m\in(-\alpha,\alpha)$ denote the unique maximum point of $u$. We have three possibilities.

If $v(x_m)=0$, then we can apply to \eqref{eq:def_v} the maximum principle Corollary 
\ref{coro:mp}, in both $(-\alpha,x_m)$ and $(x_m,\alpha)$, obtaining that alternative \ref{i:v1} 
is verified, with $\bar x = x_m$.

In case $v(x_m)<0$, then Corollary \ref{coro:mp} applies in $(x_m,\alpha)$, yielding $v<0$ in such 
interval. Let us define $\bar x := \inf\{-\alpha<x<x_m:v<0\text{ in }(x,x_m]\}$. Then either $\bar x = 
-\alpha$, and alternative \ref{i:v2} is verified; or $\bar x >-\alpha$, in which case $v(\bar x)=0$, 
$v<0$ in $(\bar x, \alpha)$, and 
Corollary \ref{coro:mp} yields $v>0$ in $(-\alpha,\bar x)$. Then alternative \ref{i:v1} is verified, with 
the same $\bar x$.

Finally, let us assume $v(x_m)>0$. Arguing as in the previous case, and defining now $\bar x := 
\sup\{x_m<x<\alpha:v>0\text{ in }[x_m,x)\}$, we have that $\bar x<\alpha$ provides once again  
alternative \ref{i:v1}, and we are left to exclude that $\bar x=\alpha$ (which would imply $v>0$ in $(-\alpha,\alpha)$). Assume by contradiction 
that $v>0$ in $(-\alpha,\alpha)$. We test the equation for $u$ with $e^{\omega x}v$ and that for 
$v$ with $e^{\omega x}u$, obtaining
\[
\int_{-\alpha}^\alpha e^{\omega x} f(u) v = \int_{-\alpha}^\alpha e^{\omega x} (-u'' - \omega u')v =
\int_{-\alpha}^\alpha e^{\omega x} u'v',
\]
\[
\begin{split}
\int_{-\alpha}^\alpha e^{\omega x} (f'(u)v+u')u = \int_{-\alpha}^\alpha e^{\omega x} (-v'' - \omega v')u =
\int_{-\alpha}^\alpha e^{\omega x} u'v' .
\end{split}
\]
Subtracting, and exploiting assumption \eqref{eq:HP_g_standardKPP}, we infer
\begin{equation}\label{eq:u'byparts}
0<\int_{-\alpha}^\alpha e^{\omega x} [ g'(u)u - g(u) ] v = \int_{-\alpha}^\alpha e^{\omega x} u' u = - \int_{-\alpha}^\alpha \frac{\omega}{2} e^{\omega x} u^2 \le0,
\end{equation}
a contradiction. (A similar argument can be used to show that alternative \ref{i:v2} cannot happen when $\omega=0$, but this can be obtained directly, see Remark \ref{rmk:cases_v} ahead.)
\end{proof}
\begin{remark}\label{rmk:changing_sign_rhs}
With a closer look to the previous proof, we see that whenever some $w$ satisfies
\begin{equation}\label{eq:new_w_v}
\begin{cases}
-w'' - \omega w' -f'(u)w=r\\
w(-\alpha) = w(\alpha) = 0,
\end{cases}
\qquad\text{with}\qquad
\begin{cases}
r > 0 & \text{in } (-\alpha,x_0)\\
r < 0 & \text{in } (x_0,\alpha),
\end{cases}
\end{equation}
for some $x_0\in(-\alpha,\alpha)$, then either $w$ has no zero in $(-\alpha,\alpha)$, or 
there exists a unique $\bar x \in (-\alpha,\alpha)$ such that
\begin{equation*}
\begin{cases}
w > 0 & \text{in } (-\alpha,\bar x)\\
w < 0 & \text{in } (\bar x,\alpha),
\end{cases}
\qquad\text{ and }\quad  w'(\pm\alpha)>0.  
\end{equation*}
Indeed, it is clear that \eqref{eq:new_w_v} holds true with the choice $w=v$, $r=u'$ and $x_0=x_m$, and 
the proof of Lemma \ref{lem:cases_v} is based only on the sign properties of $u'$, except for the integration by parts in \eqref{eq:u'byparts}. Thus, repeating such proof, either $w$ satisfies one of the alternatives for $v$, or $w>0$ in  $(-\alpha,\alpha)$.
\end{remark}
\begin{remark}\label{rmk:cases_v}
It is worth noticing that both alternatives in Lemma \ref{lem:cases_v} are verified, for some values of 
$\omega$. Indeed, by the very same lemma, we have that if $\omega=0$ then 
alternative \ref{i:v1} is verified, with $\bar x=x_m=0$. This can be easily checked: by differentiating 
\eqref{eq:symm_u} we obtain
\[
v(x,-\omega)=-v(-x,\omega),\qquad\text{ whence }\qquad
v(-x,0)=-v(x,0),
\] 
which implies $v(0,0)=0$, and the claim follows by the lemma. On the other hand, we are going to see that 
also alternative \ref{i:v2} is verified for some $\omega$, in particular when $\omega$ is near 
$\omega^*$, see Remark \ref{rmk:altern_2} ahead. As a consequence, by continuity, there exists at least 
a positive value of $\omega$ such that alternative \ref{i:v2} is verified and $v'(-\alpha)=0$. Notice that 
this is not in contradiction with the Hopf principle contained in Corollary \ref{coro:mp}. 
Actually, numerical simulations show that case \ref{i:v1} happens for 
$-\bar \omega < \omega < \bar \omega$, for a suitable $\bar \omega <\omega^*$; 
accordingly, case \ref{i:v2} is true when $\bar\omega\le \omega < \omega^*$, with $v'(-\alpha,\bar\omega)=0$, 
while $-\omega^*<\omega\le-\bar \omega$ yields $v<0$ in $(-\alpha,\alpha)$  (see Figure 
\ref{fig:v}).
\end{remark}
\begin{figure}[t]
\centering
	\input{logistic_BW/v_x_omega.pgf}
\caption{numerical plot of the map $\omega\mapsto v(\cdot,\omega)$ ($\alpha=
\pi$, $f(s)=s-s^2$). The dashed line corresponds to the symmetric case $\omega=0$, 
while the dash-dotted one to a case in which $\omega=\bar\omega$ yields $v'(-\alpha,\bar\omega)=0$.}
\label{fig:v}
\end{figure}

Now, if $v$ satisfies alternative \ref{i:v1} in Lemma \ref{lem:cases_v} for some $\omega$, e.g.~if 
$\omega=0$, then \eqref{eq:lambda_dot} readily implies that $\dot\lambda(\omega)>0$. On the other 
hand, when $v < 0$ in $(-\alpha,\alpha)$, the study of the sign of $\dot \lambda$ is more delicate. 
To deal with it, we will apply Remark \ref{rmk:changing_sign_rhs} to a modified problem, involving a 
normalization of $u$ with respect to its maximum value $u(x_m)$.

\begin{lemma}\label{lem:tilde}
For $|\omega|<\omega^*$ let us define
\[
\tilde u(\cdot,\omega) := \frac{1}{u(x_m(\omega))}\,u(\cdot,\omega).
\]
Then the map $\omega\mapsto\tilde u$ is of class $C^1$, and
\begin{equation}\label{eq:v_tilde}
\tilde v := \frac{\partial}{\partial \omega} \tilde u=\frac{1}{u(x_m)}\,v-\frac{v(x_m)}{u(x_m)^2}\,u
\quad\text{ satisfies }\qquad
\begin{cases}
-\tilde v '' - \omega \tilde v' - f'(u) \tilde v = r & \text{in } (-\alpha,\alpha)\\
\tilde v(-\alpha)=\tilde v(\alpha)=0,
\end{cases}
\end{equation}
where
\begin{equation}\label{eq:r,h}
r:=\frac{g'(u)u-g(u)}{u(x_m)} \left [h - \frac{v(x_m)}{u(x_m)} \right ],\qquad\qquad
h:=\frac{u'}{g'(u)u-g(u)}.
\end{equation}
\end{lemma}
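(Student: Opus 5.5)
The plan has two parts: regularity of $\omega\mapsto\tilde u$, and then a direct computation of the equation for $\tilde v$.

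\emph{Regularity.} By Proposition \ref{prop:main_both}, $\omega\mapsto u(\cdot,\omega)\in C^2([-\alpha,\alpha])$ is $C^1$, so it suffices to show that $m(\omega):=u(x_m(\omega),\omega)$ is $C^1$ and positive. Positivity holds since $u>0$ inside. For differentiability I would show that $x_m$ is $C^1$ via the implicit function theorem applied to $G(x,\omega):=u'(x,\omega)$. The map $G$ is $C^1$ in $(x,\omega)$ jointly: $u'\in C^1$ in $\omega$ with values in $C^1$, and $u\in C^3$ in $x$. Its $x$-derivative at $x_m$ is $u''(x_m)=-f(u(x_m))-\omega u'(x_m)=-f(u(x_m))<0$, because $0<u(x_m)<\bar u$ by Lemma \ref{lem:apriori+subsup}. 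Then $m(\omega)=u(x_m(\omega),\omega)$ is $C^1$, and since $u'(x_m)=0$ the chain rule gives
\[
m'(\omega)=u'(x_m)\dot x_m+v(x_m)=v(x_m).
\]
Alternatively, one could avoid computing $\dot x_m$ by an envelope argument, but the IFT route is cleanest.

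\emph{The derivative and its boundary values.} The quotient rule gives
\[
\tilde v=\frac{v}{u(x_m)}-\frac{v(x_m)}{u(x_m)^2}\,u,
\]
which is the stated formula. The boundary conditions $\tilde v(\pm\alpha)=0$ are immediate from $u(\pm\alpha)=v(\pm\alpha)=0$.

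\emph{The equation.} Let $L:=-\partial_{xx}-\omega\partial_x-f'(u)$. By \eqref{eq:def_v}, $Lv=u'$. Using \eqref{eq:u} and $f(u)=au-g(u)$,
\[
Lu=f(u)-f'(u)u=(au-g(u))-(a-g'(u))u=g'(u)u-g(u).
\]
Combining these,
\[
L\tilde v=\frac{u'}{u(x_m)}-\frac{v(x_m)}{u(x_m)^2}\bigl(g'(u)u-g(u)\bigr).
\]
Factoring out $\frac{g'(u)u-g(u)}{u(x_m)}$ gives exactly $r$ with $h=u'/(g'(u)u-g(u))$. This is legitimate inside $(-\alpha,\alpha)$, where $g'(u)u-g(u)>0$ by \eqref{eq:HP_g_standardKPP} and $0<u<\bar u$. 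At $x=\pm\alpha$ the quantity $h$ blows up, so I would read \eqref{eq:r,h} as an identity on the open interval, while $r$ itself equals the continuous expression above.

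The only delicate point is the differentiability of $u(x_m(\omega),\omega)$. This is where nondegeneracy of the maximum, $u''(x_m)<0$, is used.
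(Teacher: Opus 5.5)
Your proposal is correct and follows the paper's proof essentially step by step: the implicit function theorem applied to $u'(x,\omega)=0$ using $u''(x_m)=-f(u(x_m))<0$, the chain rule giving $\frac{d}{d\omega}u(x_m(\omega),\omega)=v(x_m)$, the quotient rule, and linearity together with $-u''-\omega u'-f'(u)u=g'(u)u-g(u)$. Incidentally, your sign $u''(x_m)=-f(u(x_m))$ is the right one; the paper's display drops the minus sign, although it reaches the same conclusion $u''(x_m)<0$.
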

\begin{proof}
First, we notice that  the map $x_m\from(-\omega^*,\omega^*) \to (-\alpha,\alpha)$ 
is implicitly defined by the equation $u'(x_m,\omega)=0$. By point \ref{i:1} of Lemma 
\ref{lem:apriori+subsup} we have that $u(x_m)<\bar u$, thus assumption \eqref{eq:HP_f_base} 
and the equation for $u$ yield
\[
\partial_{x} u'(x_m,\omega) = u''(x_m,\omega) = f(u(x_m,\omega)) <0;
\]
 the implicit function theorem applies, and $x_m$ is a $C^1$ function of $\omega$. 
As a straightforward consequence, we obtain that  $K(\omega):=u(x_m(\omega),\omega)$ is $C^1$ too, with
\[
\dot K(\omega)=\frac{d}{d\omega}\,u(x_m(\omega),\omega)=u'(x_m(\omega),\omega)\cdot \dot x_m(\omega) + v(x_m(\omega),\omega)=v(x_m).
\]
We obtain
\[
\tilde v(\cdot,\omega)= \frac{\partial}{\partial\omega}\,\frac{u(\cdot,\omega)}{K(\omega)}=
\frac{v(\cdot,\omega)}{K(\omega)}-\frac{\dot K(\omega)}{K(\omega)^2}\,u(\cdot,\omega)=\frac{1}{u(x_m)}\,v-\frac{v(x_m)}{u(x_m)^2}\,u.
\]
Since the equation for $u$ can be written as 
\[
-u''-\omega u' - f'(u)u=f(u)-f'(u)u=-g(u)+g'(u)u,
\] 
by linearity we have that $\tilde v$ satisfies the problem in  \eqref{eq:v_tilde}, with right hand side
\[
r=\frac{1}{u(x_m)}\,u'-\frac{v(x_m)}{u(x_m)^2}\,\left(g'(u)u-g(u)\right)=
\frac{g'(u)u-g(u)}{u(x_m)} \left [\frac{u'}{g'(u)u-g(u)} - \frac{v(x_m)}{u(x_m)} \right ],
\]
which is \eqref{eq:r,h}.
\end{proof}
In view of Remark \ref{rmk:changing_sign_rhs}, the next step consists in studying the sign of $r$ on 
$(-\alpha,\alpha)$. To this aim we need a preliminary lemma, concerning the log-concavity of $u$.
\begin{lemma}\label{lem:z'}
We have  $\left(\dfrac{u'}{u}\right)'<0$ in $(-\alpha,\alpha)$.
\end{lemma}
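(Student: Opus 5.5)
Set $z:=u'/u$ on $(-\alpha,\alpha)$. Using the equation for $u$ and $f(u)=au-g(u)$, a direct computation gives the Riccati equation
\[
z' = \frac{u''}{u}-z^2 = -\omega z - a + \frac{g(u)}{u} - z^2 .
\]
The sign of $z'$ will be obtained from a comparison argument on the ODE for $z'$, combined with the boundary behaviour at $\pm\alpha$. Since $u(\pm\alpha)=0$ and $u'(\pm\alpha)\neq0$ (Hopf, or Lemma \ref{lem:apriori+subsup}), we have $z\to+\infty$ as $x\to-\alpha^+$ and $z\to-\infty$ as $x\to\alpha^-$, and $z'\sim -z^2\to-\infty$ near both endpoints. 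Thus $z'<0$ in a neighbourhood of each endpoint.

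Next, set $w:=z'$ and differentiate the Riccati equation:
\[
w' = -\omega w - 2zw + \Big(\frac{g(u)}{u}\Big)' = -(\omega+2z)w + \Big(\frac{g(u)}{u}\Big)'\!(u)\,u' .
\]
By \eqref{eq:HP_g_standardKPP}, $(g(s)/s)'=(g'(s)s-g(s))/s^2>0$ on $(0,\bar u)$, so the forcing term has the sign of $u'$. By Lemma \ref{lem:apriori+subsup} it is positive on $(-\alpha,x_m)$ and negative on $(x_m,\alpha)$.

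I argue by contradiction, assuming $w\ge0$ somewhere. Let $x_1$ be the smallest point where $w(x_1)=0$; such a point lies in the interior, by the endpoint behaviour. At $x_1$ we have $w'(x_1)=(g(u)/u)'u'(x_1)$, and since $w<0$ on $(-\alpha,x_1)$ we need $w'(x_1)\ge0$, hence $u'(x_1)\ge0$, i.e. $x_1\le x_m$. Symmetrically, the largest zero $x_2$ satisfies $w'(x_2)\le0$, forcing $u'(x_2)\le0$, i.e. $x_2\ge x_m$. On $[x_1,x_2]$, the first-order linear equation $w'+(\omega+2z)w=$ forcing is solved by an integrating factor $E(x)=\exp\int(\omega+2z)$, which gives $(Ew)'=E\,(g(u)/u)'u'$. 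This is positive on $(x_1,x_m)$ and negative on $(x_m,x_2)$, so $Ew$ increases from $0$ and then decreases back to $0$; in particular $w>0$ on $(x_1,x_2)$ and $w(x_m)>0$. Equivalently, $z$ is strictly increasing near $x_m$, where $z(x_m)=0$ and $u''(x_m)=-f(u(x_m))<0$. But $z'(x_m)=u''(x_m)/u(x_m)<0$, which is a contradiction. If $x_1=x_2=x_m$ the same identity $z'(x_m)=-f(u(x_m))/u(x_m)<0$ contradicts $w(x_m)=0$. The degenerate cases where $w'(x_1)=0$ need care: then $u'(x_1)=0$, so $x_1=x_m$, which is again excluded by $z'(x_m)<0$.

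The main obstacle is the rigorous handling of the first touching point, which is exactly the degenerate case just mentioned; it is resolved by the explicit value $z'(x_m)=-f(u(x_m))/u(x_m)<0$, which relies on $u(x_m)<\bar u$. Along the way I must also check that $(g(u)/u)'$ is legitimately $C^0$, which holds since $f\in C^1$ and $u>0$ in the interior. Only \eqref{eq:HP_g_standardKPP} is needed, not the enhanced assumption.
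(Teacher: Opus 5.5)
Your proof is correct, and it organizes the argument differently from the paper. The paper splits the interval at $x_m$. On $(-\alpha,x_m]$ it reads $z'<0$ directly off the Riccati equation: $z\ge0$, $f(u)>0$ and $\omega\ge0$ make every term nonpositive. On $(x_m,\alpha)$ it runs a first-crossing argument starting from $x_m$, where at the first zero $x_1$ of $z'$ one gets $z''(x_1)=\frac{g'(u)u-g(u)}{u}\,z(x_1)<0$.

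You instead run one first-crossing argument from the left endpoint:
\begin{itemize}
\item the endpoint blow-up $z'\to-\infty$ at $\pm\alpha$ (an extra input the paper doesn't need) guarantees the first zero exists;
\item the sign of the forcing term $(g(s)/s)'|_{s=u}\,u'$ places that zero at or before $x_m$;
\item integrating $(Ew)'>0$ on $[x_1,x_m]$ then contradicts $z'(x_m)=-f(u(x_m))/u(x_m)<0$.
\end{itemize}
What this buys you is independence from the sign of $\omega$: your argument proves the lemma for every $|\omega|<\omega^*$. The paper's left-half step uses $\omega\ge0$. That is harmless there, since Section 3 restricts to $0\le\omega<\omega^*$, and the case $\omega<0$ would follow from $u(x,-\omega)=u(-x,\omega)$.

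Some of your steps are superfluous. The treatment of $x_2$ and of the degenerate case $w'(x_1)=0$ are not needed: once you know $x_1\le x_m$ and $w(x_m)\neq0$, integrating on $[x_1,x_m]$ alone gives the contradiction. A shorter proof, also free of any sign condition on $\omega$, starts from $w(x_m)<0$ and runs the first-crossing argument in both directions from $x_m$. At the nearest zero $y>x_m$ you would need $w'(y)\ge0$, while the forcing there is negative. At the nearest zero $y<x_m$ you would need $w'(y)\le0$, while the forcing there is positive. This avoids the endpoint asymptotics entirely.
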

\begin{proof}
Let us write $z:= \dfrac{u'}{u}$ and compute the derivative of $z$:
\begin{equation}\label{z_first_der}
\begin{split}
z'=\left ( \frac{u'}{u} \right )' &= \frac{u''}{u} - \left ( \frac{u'}{u} \right )^2= \frac{-\omega u' - f(u)}{u} - \left ( \frac{u'}{u} \right )^2\\
&= - z^2 - \omega z - a + \frac{g(u)}{u},
\end{split}
\end{equation}
where we used that $u$ solves \eqref{eq:u}. We distinguish two cases.

If $-\alpha<x\le x_m$, then $z(x)\ge0$. Moreover, $0<u(x)<\bar u$ implies $f(u(x))=au(x)-g(u(x))>0$ by 
\eqref{eq:HP_f_base}; then \eqref{z_first_der} implies $z'(x) < 0$, for every $x \in (-\alpha,x_m]$.

On the other hand, if $x_m<x<\alpha$ then $z(x)<0$ and \eqref{z_first_der} does not provide enough 
information. Let us define $x_1:=\sup\{x_m<x<\alpha:z'<0\text{ in }(-\alpha,x)\}$ and assume by contradiction that $x_m<x_1<\alpha$. We obtain
\[
z(x_1)<0,\qquad z'(x_1)=0\qquad\text{ and }z''(x_1)\ge0,
\]
indeed, since $u$ is of class $C^3$ (recall Remark \ref{rmk:more_regular}), $z''$ is well defined, and $z'$ is not locally decreasing at $x_1$.

However, if we compute the second derivative of $z$, we have:
\begin{equation*}
\begin{split}
z'' &=- 2 z z' - \omega z' + \left( \frac{g(u)}{u} \right)' = - 2 z z' - \omega z' + \frac{g'(u)u'}{u} - \frac{u' g(u)}{u^2}\\
&= - 2 z z' - \omega z' + \frac{g'(u)u-g(u)}{u} \cdot \frac{u'}{u} = - 2 z z' - \omega z' + \frac{g'(u)u-g(u)}{u} \cdot z,
\end{split}
\end{equation*}
and, in particular,

\begin{equation*}
z''(x_1) = \left. \frac{g'(u)u-g(u)}{u} \right |_{x=x_1} \cdot z(x_1) < 0
\end{equation*}
by assumption \eqref{eq:HP_g_standardKPP}. But this is a contradiction, thus $x_1=\alpha$, and 
$z$ is strictly decreasing in $(-\alpha,\alpha)$.
\end{proof}

\begin{lemma}\label{lem:sign_h_r}
Let $r$ be defined as in \eqref{eq:r,h}, and assume that, for some fixed $\omega$, $v(x_m)<0$. Then
there exists $x_0 \in (x_m,\alpha)$ such that:
\begin{equation*}
\begin{cases}
r(x) > 0 & \text{in } (-\alpha,x_0),\\
r(x) < 0 & \text{in } (x_0,\alpha).
\end{cases}
\end{equation*}
\end{lemma}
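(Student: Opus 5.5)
Since $g'(u)u-g(u)>0$ in $(-\alpha,\alpha)$ by \eqref{eq:HP_g_standardKPP}, the sign of $r$ equals the sign of $h-\frac{v(x_m)}{u(x_m)}=h+c$, where $c:=-\frac{v(x_m)}{u(x_m)}>0$ by assumption. So the plan is to study $h=\frac{u'}{g'(u)u-g(u)}$. On $(-\alpha,x_m]$ we have $u'\ge0$, hence $h\ge0$ and $r>0$; it remains to analyse $(x_m,\alpha)$. There $h<0$, and $h(x)\to-\infty$ as $x\to\alpha^-$: indeed $u'(\alpha)<0$ by Lemma \ref{lem:apriori+subsup}, while $g'(u)u-g(u)\to0$ as $u\to0$, since $g(0)=g'(0)=0$. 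Hence $h+c$ is positive at $x_m$ and negative near $\alpha$, so a zero $x_0\in(x_m,\alpha)$ exists. Uniqueness, with the sign change as stated, follows once we show that $h$ is strictly decreasing on $(x_m,\alpha)$.

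Write
\[
h=\frac{u'}{u}\cdot\frac{u}{g'(u)u-g(u)}=\frac{z}{\psi(u)},\qquad \psi(s):=g'(s)-\frac{g(s)}{s}>0 .
\]
On $(x_m,\alpha)$ we have $z<0$ and, by Lemma \ref{lem:z'}, $z'<0$, so $|z|$ is positive and increasing. Moreover $u$ is decreasing there, and by the enhanced KPP assumption \eqref{eq:HP_g_enhancedKPP} $\psi$ is strictly increasing. Hence $\psi(u(x))$ is positive and decreasing in $x$, so $1/\psi(u)$ is positive and increasing. Then $-h=|z|\cdot\frac{1}{\psi(u)}$ is a product of two positive increasing functions, at least one strictly, so $h$ is strictly decreasing on $(x_m,\alpha)$. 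Note that $\psi$ is only known to be monotone, not differentiable, so I would argue with monotonicity of the product directly rather than by differentiating.

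Combining these: $r>0$ on $(-\alpha,x_m]$, $h+c$ is strictly decreasing on $[x_m,\alpha)$ from the positive value $c$ to $-\infty$, so it has a unique zero $x_0\in(x_m,\alpha)$, with $r>0$ on $(x_m,x_0)$ and $r<0$ on $(x_0,\alpha)$. This gives the claim.

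The main point of the argument is the monotonicity of $h$ on the decreasing branch, which is exactly where \eqref{eq:HP_g_enhancedKPP} enters: the weaker condition \eqref{eq:HP_g_standardKPP} would not control $\psi(u)$. The other step needing care is the boundary limit $h\to-\infty$, which relies on the Hopf-type nondegeneracy $u'(\alpha)<0$.
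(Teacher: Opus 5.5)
Your proof is correct and follows essentially the same route as the paper. You write $r$ as a positive factor times $h+c$ with $c>0$, get the sign on $(-\alpha,x_m]$ and the limit $h\to-\infty$ at $\alpha$, and show that $h$ is strictly decreasing on $[x_m,\alpha)$ by factoring it as $\frac{u'}{u}\cdot\frac{u}{g'(u)u-g(u)}$. The monotonicity of that product comes from Lemma \ref{lem:z'} and the enhanced KPP assumption \eqref{eq:HP_g_enhancedKPP}, exactly as in the paper.
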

\begin{proof}
Notice that, by \eqref{eq:r,h}, we can write $r$ in $(-\alpha,\alpha)$ as
\[
r(x)=c(x)[h(x)+k], \qquad \text{where }h(x):=\frac{u'(x)}{g'(u(x))u(x)-g(u(x))}\ ,
\quad c(x)>0,\quad k>0,
\]
by \eqref{eq:HP_g_standardKPP} and $v(x_m)<0$, respectively, while $h$ has the same sign as $u'$. Then, 
it is clear that $r>0$ on $(-\alpha,x_m]$, and that $h(x_m)=0$, $\lim_{x\to\alpha} h(x)=-\infty$. 
We claim that $h$ is strictly decreasing in $[x_m,\alpha)$: in such a case, we infer that the equation 
$h(x)=-k$ has exactly one solution $x_0\in (x_m,\alpha)$, and the lemma follows.

To prove the claim, we first notice that, by assumption \eqref{eq:HP_g_enhancedKPP}, the map
\[
x\mapsto g'(u(x))-\dfrac{g(u(x))}{u(x)}\qquad\text{ is strictly decreasing in }[x_m,\alpha),
\]
as $u$ is strictly decreasing in such interval. Then we can rewrite $h$ on $[x_m,\alpha)$ as 
\begin{equation*}
h=\frac{u}{g'(u)u-g(u)} \cdot \frac{u'}{u}
\end{equation*}
which is the product of a positive, strictly increasing factor and a negative strictly decreasing one, 
by Lemma \ref{lem:z'}. Then the claim follows, proving the lemma.   
\end{proof}
As we mentioned, the sign properties of $r$ have immediate consequences on those of $\tilde v$.
\begin{lemma}\label{lem:cases_vtilde}
Assume that $0\le \omega <\omega^*$, and that alternative \ref{i:v2} in Lemma \ref{lem:cases_v}
holds true. Then 
\begin{equation*}
\begin{cases}
\tilde v > 0 & \text{in } (-\alpha,x_m)\\
\tilde v < 0 & \text{in } (x_m,\alpha),
\end{cases}
\qquad\text{ and }\quad  \tilde v'(\pm\alpha)>0;  
\end{equation*}
\end{lemma}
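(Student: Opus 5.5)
The plan is to reduce the statement to Remark \ref{rmk:changing_sign_rhs}, applied to $w=\tilde v$ with right hand side $r$ from Lemma \ref{lem:tilde}. The only extra input I need is that $\tilde v$ vanishes at $x_m$.

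\emph{Step 1: the right hand side changes sign once.} Alternative \ref{i:v2} of Lemma \ref{lem:cases_v} gives $v<0$ in $(-\alpha,\alpha)$, and in particular $v(x_m)<0$. Lemma \ref{lem:sign_h_r} then provides some $x_0\in(x_m,\alpha)$ with $r>0$ in $(-\alpha,x_0)$ and $r<0$ in $(x_0,\alpha)$. Since $\tilde v$ solves \eqref{eq:v_tilde}, the hypotheses \eqref{eq:new_w_v} of Remark \ref{rmk:changing_sign_rhs} hold with $w=\tilde v$.

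\emph{Step 2: apply the remark.} The remark gives a dichotomy: either $\tilde v$ has no zero in $(-\alpha,\alpha)$, or there is a unique $\bar x\in(-\alpha,\alpha)$ with $\tilde v>0$ on $(-\alpha,\bar x)$, $\tilde v<0$ on $(\bar x,\alpha)$, and $\tilde v'(\pm\alpha)>0$. To rule out the first option I evaluate \eqref{eq:v_tilde} at $x_m$:
\[
\tilde v(x_m)=\frac{v(x_m)}{u(x_m)}-\frac{v(x_m)}{u(x_m)^2}\,u(x_m)=0 .
\]
So $\tilde v$ has a zero at the interior point $x_m$. Moreover $\tilde v\not\equiv0$: otherwise $r\equiv0$, contradicting $r>0$ on $(-\alpha,x_0)$. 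Hence the second option of the dichotomy holds. By uniqueness of the zero we get $\bar x=x_m$, which is exactly the claimed sign pattern together with $\tilde v'(\pm\alpha)>0$.

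\emph{Main obstacle.} The delicate point is checking that Remark \ref{rmk:changing_sign_rhs} really applies with the sign-change point $x_0$ of $r$ different from $x_m$. In particular, the identically vanishing case, or a $\tilde v$ that vanishes without changing sign, must not slip through the alternatives. I would re-run the three-case argument of Lemma \ref{lem:cases_v}, now centred at $x_0$ and based on the sign of $\tilde v(x_0)$, using only Corollary \ref{coro:mp} on the subintervals where $r$ has a fixed sign. For instance, if $\tilde v(x_0)<0$, the corollary on $(x_0,\alpha)$ gives $\tilde v<0$ there. Going leftwards, the first zero $\bar x$ then yields $\tilde v>0$ on $(-\alpha,\bar x)$, with the Hopf conclusions at the endpoints. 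The zero at $x_m$ forces this branch, or the symmetric one, to occur rather than the zero-free case, and it identifies $\bar x=x_m$.
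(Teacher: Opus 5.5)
Your proof is correct and follows the paper's argument: Lemma \ref{lem:sign_h_r} together with Remark \ref{rmk:changing_sign_rhs} gives the dichotomy (no zero in $(-\alpha,\alpha)$, or a single sign change with $\tilde v'(\pm\alpha)>0$), and the interior zero $\tilde v(x_m)=0$ selects the second option with $\bar x=x_m$. The paper gets $\tilde v(x_m)=0$ by differentiating $\tilde u(x_m(\omega),\omega)=1$, while you read it off directly from the formula for $\tilde v$; both are valid. Your ``main obstacle'' is not a real issue, since Remark \ref{rmk:changing_sign_rhs} is already stated for an arbitrary sign-change point $x_0$ of $r$, and its proof uses Corollary \ref{coro:mp} only on subintervals where $r$ has a fixed sign.
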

\begin{proof}
By Lemmas \ref{lem:tilde}, \ref{lem:sign_h_r} and Remark \ref{rmk:changing_sign_rhs} we 
have that either $\tilde v$ has a definite sign in $(-\alpha,\alpha)$, or it has exactly one 
change of sign, from positive to negative. As a consequence, to prove the lemma it is enough to 
show that $\tilde v(x_m)=0$. But this is a direct consequence of the definition of $\tilde u$, $\tilde v$ and 
$x_m$, since it implies
\[
0=\frac{d}{d \omega}\,1 = \frac{d}{d \omega}\, \tilde u (x_m(\omega),\omega) = 
\tilde u' (x_m(\omega),\omega)\cdot \dot x_m(\omega) + \tilde v(x_m(\omega),\omega)= \tilde v(x_m).
\qedhere
\]
\end{proof}
\begin{proof}[End of the proof of Proposition \ref{prop:lambda_dot>0}]
We have three possibilities.

First, assume that $0\le \omega < \omega^*$ and that alternative \ref{i:v1} in Lemma \ref{lem:cases_v}
holds true. Then $v'(\pm\alpha)>0$, while we know from point \ref{i:1} of Lemma \ref{lem:apriori+subsup} that 
$u'(\alpha)<0< u'(-\alpha)$. Then we can use \eqref{eq:lambda(omega)}, \eqref{eq:lambda_dot} to write
\begin{equation*}
\dot \lambda = \frac{u'(-\alpha)v'(\alpha)-v'(-\alpha)u'(\alpha)}{u'(\alpha)^2}>0.
\end{equation*}

On the other hand, assume that $0\le \omega < \omega^*$ and that alternative \ref{i:v2} in Lemma 
\ref{lem:cases_v} holds true. By definition of $\tilde u$, $\tilde v$ we obtain 
\[
\tilde u'(-\alpha) + \lambda \tilde u'(\alpha)=0
\]
and, differentiating,
\[
\tilde v'(-\alpha) + \lambda \tilde v'(\alpha) + \dot \lambda \tilde u'(\alpha)=0.
\]
Now, $\tilde u'(\pm\alpha)$ has the same sign of $u'(\pm\alpha)$, while $\tilde v'(\pm\alpha)>0$ by Lemma 
\ref{lem:cases_vtilde}. Then we can rewrite $\dot \lambda$ in function of the rescaled variables, obtaining
\begin{equation*}
\dot \lambda = \frac{\tilde u'(-\alpha) \tilde v'(\alpha) - \tilde u'(\alpha) \tilde v'(-\alpha) }{\tilde u'(\alpha)^2}>0
\end{equation*}
also in this case.

Finally, in case $-\omega^*<\omega<0$, \eqref{eq:symm_lambda} yields
\[
\dot\lambda(\omega)=\lambda(\omega)^2\cdot\dot\lambda(-\omega)>0,
\]
since $0<-\omega<\omega^*$ falls within the cases we have already considered.
\end{proof}

\section{Bifurcation analysis}\label{sec:bif}

As we mentioned in Remark \ref{rmk:after_prop2}, to conclude the proof of our main results 
we are left to evaluate the limits
\[
\lim_{\omega\to\pm\omega^*}\lambda(\omega).
\]
This will be done through a bifurcation analysis, since we are going to show that the curve of 
positive solutions 
\[
(-\omega^*,\omega^*) \ni \omega \mapsto u(\cdot,\omega)
\]
bifurcates from the trivial solution at $\omega =\pm\omega^*$. Actually, as in the previous section, 
also here we can focus on e.g.~$\omega=\omega^*$, as the behaviour at $-\omega^*$ can be easily deduced 
by the symmetry properties \eqref{eq:symm_u}, \eqref{eq:symm_lambda}.

We will use well-established results about the bifurcation from simple eigenvalues, 
which allow a fairly detailed description of $u(\cdot,\omega)$ near $\omega = \omega^*$. In 
particular, our main references in this direction are \cite{MR288640} and \cite{AP}. For the reader's 
convenience, we summarize in the next statement a partial selection of the results therein, adapted to 
our purposes.

\begin{theorem}[{\cite[Theorem 1.7]{MR288640},\cite[Section 5.4]{AP}}]\label{thm:bif}
Let $X$, $Y$ be Banach spaces, $\omega^*\in\R$, $0\neq u^*\in X$, $0\neq \psi \in X^*$, and $F\in C^1(\R\times X,Y)$ satisfy
\begin{itemize}
\item $F(\omega,0)=0$ for every $\omega\in\R$;
\item the mixed second partial derivative $F_{\omega u}$ exists and is continuous;
\item $\ker F_u(\omega^*,0)=\spann\{u^*\}$, $F_u(\omega^*,0)[X]= \ker \psi$;
\item $\hat A:=\langle\psi, F_{\omega u}(\omega^*,0)\rangle \neq0$.
\end{itemize}
If $W$ is any complement of $\ker F(\omega^*,0)$ in $X$, then there exist $\eps>0$, and continuous 
functions $\mu \from (-\eps, \eps)\to\R$, $\gamma \from (-\eps, \eps)\to W$ 
such that $\mu(0)=0$, $\gamma(0)=0$ and the set of nontrivial solutions of 
$F(\omega,u)=0$,
\begin{equation*}
    \mathcal{S} :=\{(\omega,u)\in \mathbb{R} \times X: F(\omega,u)=0,\ u\neq 0 \} ,
\end{equation*}
is parametrized, locally at $(\omega^*,0)$, by
\[
(\omega(t),u(t))=(\omega^*+\mu(t),t u^* + t\gamma(t))
\qquad\text{ with } |t|<\eps. 
\]
Moreover, if $\partial_{uu}F$ is also continuous, the functions $\mu$ and $\gamma$ are $C^1$, and, 
as $\omega\to\omega^*$:
\begin{itemize}
\item if $\hat B:=-\frac{1}{2\hat A}\langle\psi, F_{u u}(\omega^*,0)[u^*]^2\rangle \neq0$ then the bifurcating branch 
is parametrized by 
\[
u=\frac{\omega-\omega^*}{\hat B}\,u^* + o(\omega-\omega^*);
\]
\item if $F_{u u}(\omega^*,0)[u^*]^2=0$, $\partial_{uuu}F$ is also continuous, and
$\hat C:=-\frac{1}{6\hat A}\langle\psi, F_{u u u}(\omega^*,0)[u^*]\rangle \neq0$ then the bifurcating 
branch is parametrized by 
\[
u=\pm\left(\frac{\omega-\omega^*}{\hat C}\right)^{1/2}\cdot u^* + O(\omega-\omega^*).
\]
\end{itemize}
\end{theorem}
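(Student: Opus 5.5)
The plan is a Lyapunov--Schmidt reduction in the style of Crandall--Rabinowitz, followed by a Taylor expansion of the reduced scalar equation.

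\textbf{Setup.} Write $L:=F_u(\omega^*,0)$. The hypotheses give $\ker L=\spann\{u^*\}$ and $L[X]=\ker\psi$, so $L$ is Fredholm of index zero. Hence $L|_W\from W\to\ker\psi$ is an isomorphism, and $Y=\ker\psi\oplus\spann\{y_0\}$ for some $y_0$ with $\langle\psi,y_0\rangle=1$.

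\textbf{Blow-up map and existence of the branch.} For $(\omega,t,w)\in\R\times\R\times W$ define
\[
G(\omega,t,w):=\int_0^1 F_u\bigl(\omega,st(u^*+w)\bigr)[u^*+w]\,ds .
\]
By the fundamental theorem of calculus, $G(\omega,t,w)=F(\omega,t(u^*+w))/t$ for $t\neq0$. The map $G$ is continuous. It is $C^1$ in $(\omega,w)$ because $F_u$ is $C^1$ in $\omega$ (continuity of $F_{\omega u}$) and $F$ is $C^1$. At the base point we have $G(\omega^*,0,0)=Lu^*=0$, and
\[
\partial_{(\omega,w)}G(\omega^*,0,0)[\sigma,\tilde w]=\sigma F_{\omega u}(\omega^*,0)[u^*]+L\tilde w .
\]
This map is an isomorphism $\R\times W\to Y$. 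Injectivity and surjectivity both follow by applying $\psi$, which kills $L\tilde w$ and gives $\sigma\hat A$ with $\hat A\neq0$, and then using $L|_W\from W\to\ker\psi$. The implicit function theorem, in the version requiring only continuity in the parameter $t$, then yields continuous $\mu(t)$, $\gamma(t)$ with $\mu(0)=0$, $\gamma(0)=0$ and $G(\omega^*+\mu(t),t,\gamma(t))=0$.

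\textbf{Local uniqueness.} This is the step I expect to need the most care. Take any nontrivial solution $(\omega,u)$ near $(\omega^*,0)$ and decompose $u=tu^*+w'$ with $w'\in W$. Project $F(\omega,u)=0$ onto $\ker\psi$ and use the invertibility of $L|_W$, writing $F(\omega,u)=F_u(\omega,0)u+o(\|u\|)$ uniformly in $\omega$. This gives $\|w'\|=o(\|u\|)$, so $\|w'\|=o(|t|)$ and in particular $t\neq0$. Then $w=w'/t$ is small, and $(\omega,t,w)$ is a zero of $G$ near the base point, so it lies on the curve by the uniqueness part of the implicit function theorem.

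\textbf{Regularity and the expansions.} If $F_{uu}$ is continuous, then $G$ is $C^1$ also in $t$, with $\partial_tG(\omega^*,0,0)=\tfrac12F_{uu}(\omega^*,0)[u^*]^2$, so $\mu$ and $\gamma$ are $C^1$. Differentiating the identity at $t=0$ and applying $\psi$ gives
\[
\mu'(0)\hat A+\tfrac12\langle\psi,F_{uu}(\omega^*,0)[u^*]^2\rangle=0,
\qquad\text{that is}\qquad \mu'(0)=\hat B .
\]
If $\hat B\neq0$, then $\omega-\omega^*=\hat Bt+o(t)$. Inverting, $t=(\omega-\omega^*)/\hat B+o(\omega-\omega^*)$, and since $\gamma(t)=o(1)$ this gives
\[
u=t(u^*+\gamma)=\frac{\omega-\omega^*}{\hat B}\,u^*+o(\omega-\omega^*).
\]
In the degenerate case $F_{uu}(\omega^*,0)[u^*]^2=0$, the same differentiation gives $\mu'(0)=0$ and $L\gamma'(0)=0$, hence $\gamma'(0)=0$ because $\gamma'(0)\in W$. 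Expanding $G$ to second order in $t$, using $F_{uuu}$, and again applying $\psi$ (so that the $L\gamma''$ term drops out) gives
\[
\hat A\,\mu''(0)/2+\tfrac16\langle\psi,F_{uuu}(\omega^*,0)[u^*]^3\rangle=0,
\qquad\text{that is}\qquad \mu(t)=\hat C\,t^2+o(t^2).
\]
Solving for $t$ gives $t=\pm\bigl((\omega-\omega^*)/\hat C\bigr)^{1/2}(1+o(1))$. Since $\gamma(t)=O(t)$, we have $t\gamma(t)=O(t^2)=O(\omega-\omega^*)$, which is exactly the claimed $O(\omega-\omega^*)$ error term.

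The step I expect to be most delicate is justifying this second-order expansion with only continuity of the derivatives assumed. I would handle it through the integral (Taylor remainder) forms of $G$ rather than by pointwise differentiation.
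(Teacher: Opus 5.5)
Your blow-up map $G(\omega,t,w)=F(\omega,t(u^*+w))/t$ combined with the implicit function theorem is the Crandall--Rabinowitz argument \cite{MR288640}, which the paper invokes by citation without giving a proof of its own. The following parts are sound: existence, local uniqueness, $C^1$ regularity, $\mu'(0)=\hat B$, and the first bullet. The step you flagged also works. Under the stated hypotheses $\mu,\gamma$ are only $C^1$, so you should not write $\mu''(0)$ or $\gamma''$. The integral remainders nevertheless give $\mu(t)=\hat Ct^2+o(t^2)$ directly: the cross term $t\langle\psi,F_{uu}(\omega^*,0)[u^*,\gamma(t)]\rangle$ is $o(t^2)$ because $\gamma'(0)=0$ gives $\gamma(t)=o(t)$.

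The gap is in your last line. Set $s:=((\omega-\omega^*)/\hat C)^{1/2}$. From $\mu(t)=\hat Ct^2+o(t^2)$ you only get $t=\pm s(1+o(1))$. Then $u=\pm s\,u^*+(t\mp s)u^*+t\gamma(t)$, and you estimated only $t\gamma(t)$. The term $(t\mp s)u^*$ is merely $o(s)=o(|\omega-\omega^*|^{1/2})$, not $O(\omega-\omega^*)=O(s^2)$.

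To get $O(s^2)$ you need $t\mp s=O(s^2)$, i.e. $\mu(t)=\hat Ct^2+O(|t|^3)$. That requires three things:
\begin{enumerate}
\item[(i)] $\gamma(t)=O(t^2)$. This does follow from your hypotheses. The $\psi$-component of $G=0$ gives $|\mu|\le C(t^2+|t|\,\|\gamma\|)$. Because $F_{uu}(\omega^*,0)[u^*]^2=0$, the $\ker\psi$-component gives $\|\gamma\|\le C(|\mu|+t^2+|t|\,\|\gamma\|)$.
\item[(ii)] An $O(|t|^3)$ Taylor remainder for $G(\omega^*,t,w)$, e.g.\ $F_{uuu}(\omega^*,\cdot)$ Lipschitz near $0$.
\item[(iii)] $\partial_\omega G(\omega^*+\mu,t,\gamma)-F_{\omega u}(\omega^*,0)[u^*]=O(|\mu|+|t|+\|\gamma\|)$, e.g.\ $F_{\omega u}$ Lipschitz near $(\omega^*,0)$.
\end{enumerate}

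Items (ii) and (iii) do not follow from mere continuity of $F_{uuu}$ and $F_{\omega u}$, and the conclusion really needs something of this kind. Take $X=Y=\R$, $\psi=\mathrm{id}$, $u^*=1$, and
\[
F(\omega,u)=\hat A(\omega-\omega^*)u-\hat A\hat C u^3+|u|^{7/2}.
\]
All hypotheses hold: $F_{uu}(\omega^*,0)=0$, $F_{uuu}$ is continuous, and the constant in the statement equals $\hat C$. Yet the nontrivial zeros satisfy $\omega-\omega^*=\hat Cu^2-|u|^{5/2}/\hat A$, so $|u|-s$ has exact order $s^{3/2}$, not $s^2$.

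So you must either weaken the second expansion to $u=\pm s\,u^*+o(|\omega-\omega^*|^{1/2})$, or add the extra regularity and carry out (i)--(iii). The extra regularity does hold in the paper's application, where $F$ is affine in $\omega$ and e.g.\ $g(s)=bs^3$. The paper only uses the sign of $\hat C$ there, so its conclusions are unaffected.
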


To apply the previous theorem we choose 
\[
\omega^* = \sqrt{4 a - \frac{\pi^2}{\alpha^2}},
\] 
as in \eqref{eq:omega*} and, as in Section \ref{sec:omega_fixed}, $X= C^{2}_0([-\alpha,\alpha])$, $Y=C([-\alpha,\alpha])$, and the $C^1$ operator 
\begin{equation*}%\label{eq:F}
    F(\omega,u):=u''+\omega u' + a u -g(u).
\end{equation*}
Of course, the equation $F(\omega,u)=0$ has the trivial solution $u=0$ for all $\omega \in \mathbb{R}$. Moreover $F_u(\omega,u)[v] = v''+\omega v' + a v - g'(u)v$ and, since $g'(0)=0$, 
\begin{equation*}
F_u(\omega,0)[v]= v'' + \omega v' + av,
\end{equation*}
which is a Fredholm map of index 0, by Fredholm's alternative. Differentiating with respect to $\omega$, 
we obtain that the mixed second partial derivative 
\[
F_{\omega u}(\omega,0)[v]= v'.
\]
is well-defined and continuous, for every $\omega$.

The map $F_u(\omega,0)$ is not 
invertible if and only if
\[
    \begin{cases}
        v''+ \omega v' + av =0 & \text{in } \ (-\alpha,\alpha)\\
        v(-\alpha)=v(\alpha)=0,
    \end{cases}
\]
has nontrivial solutions, which happens if and only if
\begin{equation*}
   \omega=\omega_{\pm k}:=\pm\sqrt{4 a - \frac{k^2\pi^2}{\alpha^2}}\qquad 
   \text{with }k\in\left\{n\in\N:1\le n <\frac{2\alpha\sqrt{a}}{\pi}\right\},
\end{equation*}
with nontrivial solution (up to multiplicative constants)
\[
v_{\pm k}(x)= e^{-\frac{\omega_{\pm k}}{2} x } \cos{\left(\frac{k\pi}{2\alpha}x\right)}.
\]
By \eqref{eq:hp_a_alpha}, $\omega_1=\omega^*$ is always admissible. (On the other hand, any $k\ge 2$ is a possible bifurcation point for changing sign solutions, and is not relevant to the study of positive solutions because of the uniqueness property in Proposition \ref{prop:main_both}.) Then, setting
\begin{equation*}
    u^*(x) := v_1(x) = e^{-\frac{\omega^*}{2} x } \cos{\left(\frac{\pi}{2\alpha}x\right)},
\end{equation*}
it follows that the kernel of $L$ is one-dimensional, spanned by $u^*$:
\begin{equation*}
    \ker F_u(\omega^*,0) = \{tu^* : t \in \mathbb{R}\}.
\end{equation*}
By the Fredholm alternative, the range of $F_u(\omega^*,0)$ is closed and has codimension one. This implies the 
existence of $\psi\in X^*$ such that:
\begin{equation*}
    F_u(\omega^*,0)[X]= \{ y  \in Y : \langle \psi, y\rangle =0\}.
\end{equation*}
To find an explicit expression of $\psi$, we notice that $y$ belongs to the range of $F_u(\omega^*,0)$ if and only if there exist $v \in C^{2}_0([-\alpha,\alpha])$ such that
\begin{equation}\label{testtt}
    v'' + \omega ^* v' + a v = y \quad \text{in } \ (-\alpha,\alpha).
\end{equation}
Testing equation \eqref{testtt} with the function
\begin{equation*}
     v_{-1}(x) = e^{\frac{\omega^*}{2} x } \cos{\left(\frac{\pi}{2\alpha}x\right)}
\end{equation*}
we obtain
\begin{equation*}
    \int_{-\alpha}^\alpha v'' v_{-1} \, dx + \omega^* \int_{-\alpha}^\alpha v' v_{-1} \, dx + a \int_{-\alpha}^\alpha v v_{-1} \, dx = \int_{-\alpha}^\alpha y v_{-1} \, dx.
\end{equation*}
Integrating by parts and observing that $v_{-1}$ solves $v''-\omega^*v'+av=0$ in $(-\alpha,\alpha)$, we conclude that
\begin{equation*}
    \int_{-\alpha}^\alpha y v_{-1} \, dx =0.
\end{equation*}
Therefore, the range of $F_u(\omega^*,0)$ is the kernel of $\psi$, defined as
\begin{equation*}
    \langle \psi, y \rangle : = \int_{-\alpha}^\alpha e^{\frac{\omega^*}{2} x } \cos{\left(\frac{\pi}{2\alpha}x\right)} y(x) \, dx,\qquad\text{for every }y\in Y.
\end{equation*}
Finally, since 
\[
F_{\omega u}(\omega^*,0)[u^*] = (u^*)',
\]
the last assumption of Theorem \ref{thm:bif} is verified, as
\begin{equation}\label{eq:hatA}
    \hat A:=\langle \psi, F_{\omega u}(\omega^*,0)[u^*]\rangle = \int_{-\alpha}^\alpha e^{\frac{\omega^*}{2} x } \cos{\left(\frac{\pi}{2\alpha}x\right)} \cdot \left(e^{-\frac{\omega^*}{2} x } \cos{\left(\frac{\pi}{2\alpha}x\right)} \right)' \, dx = -\frac{\omega^*}{2} \alpha\neq0.
\end{equation}

Then the first part of  Theorem \ref{thm:bif} applies and, locally at $(\omega^*,0)$, $\Scal$ is 
parameterized by
\[
(\omega(t),u(t))=(\omega^*+\mu(t),t u^* + t\gamma(t)),
\qquad |t|<\eps, 
\]
with $\mu,\gamma$ continuous and vanishing at $t=0$. In particular,
\[
\lim_{t\to 0} \frac{u(t)}{t} = u^* \qquad \text{ in }C^2([-\alpha,\alpha]).
\]
Since $u^*>0$ in $(-\alpha,\alpha)$, and the cone of positive functions has nontrivial interior part in the topology of $C^2([-\alpha,\alpha])$, we have that $u(t)>0$ in $(-\alpha,\alpha)$ for $t>0$ small enough. Then, 
the uniqueness part of Proposition \ref{prop:main_both} implies 
that $u(t) = u(\omega(t),\cdot)$, as defined in that proposition. Finally, we have the following result.
\begin{proposition}\label{prop:lambda*}
With the definitions in \eqref{eq:omega*} we have
$\lim_{\omega\to\omega^*} \lambda(\omega)= \lambda^*$.
\end{proposition}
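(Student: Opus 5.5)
We prove the limit by passing to the quotient $u(t)/t$ along the bifurcating branch and reading off the boundary slopes of $u^*$. By the discussion preceding the statement, for $t>0$ small the positive solution $u(\cdot,\omega(t))$ coincides with $u(t)=t u^* + t\gamma(t)$, where $\omega(t)=\omega^*+\mu(t)$ and $\gamma(t)\to0$ in $C^2([-\alpha,\alpha])$ as $t\to0^+$.

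Since $\lambda(\omega)$ in \eqref{eq:lambda(omega)} is a ratio of derivatives, it is invariant under multiplication of $u$ by a positive constant. Hence, for $t>0$ small,
\[
\lambda(\omega(t)) = -\frac{(u^*)'(-\alpha)+\gamma(t)'(-\alpha)}{(u^*)'(\alpha)+\gamma(t)'(\alpha)} \longrightarrow -\frac{(u^*)'(-\alpha)}{(u^*)'(\alpha)},
\]
because $C^2$ convergence gives convergence of the first derivatives at the endpoints. The limit quotient is well defined, since $(u^*)'(\alpha)\neq0$, as computed next.

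For $u^*(x)=e^{-\omega^* x/2}\cos(\pi x/(2\alpha))$ we have $\cos(\pm\pi/2)=0$, so only the derivative of the cosine survives at the endpoints:
\[
(u^*)'(\pm\alpha) = \mp\frac{\pi}{2\alpha}e^{\mp\omega^*\alpha/2}.
\]
The quotient is therefore $e^{\omega^*\alpha/2}/e^{-\omega^*\alpha/2}=e^{\omega^*\alpha}=\lambda^*$.

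It remains to pass from the parameter $t\to0^+$ to $\omega\to\omega^*$, and this is the delicate step. Theorem \ref{thm:bif} gives the local description of all nontrivial solutions near $(\omega^*,0)$, but it does not by itself say that $\omega(t)$ covers a left neighbourhood of $\omega^*$. To close the gap we show that $u(\cdot,\omega)\to0$ in $C^2$ as $\omega\to\omega^*$; then, for $\omega$ near $\omega^*$, the pair $(\omega,u(\cdot,\omega))$ lies in the neighbourhood where $\Scal$ is parametrized, so $u(\cdot,\omega)=u(t)$ for some $t$. This $t$ is positive, because $u(\cdot,\omega)>0$ while for $t<0$ the function $u(t)$ is negative. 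It also tends to $0$ as $\omega\to\omega^*$, since $t\mapsto u(t)$ is injective near $0$ (indeed $u(t)/t\to u^*$).

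To prove that $u(\cdot,\omega)\to0$, we use the bounds $0<u<\bar u$ and \eqref{eq:reg_T}, which give uniform $C^3$ bounds. By Ascoli--Arzel\`a, any sequence $\omega_n\to\omega^*$ has a subsequence along which $u(\cdot,\omega_n)$ converges in $C^2$ to a nonnegative solution $w$ of \eqref{eq:u} with $\omega=\omega^*$. The test-function argument following \eqref{eq:mu<0}, applied with $\varphi_{-\omega^*}$ and $\mu_{-\omega^*}=0$, gives $\int g(w)\varphi_{-\omega^*}=0$. Since $g(s)>0$ for $s>0$, this forces $w\equiv0$. As the limit is the same along every subsequence, $u(\cdot,\omega)\to0$ in $C^2$, and the proposition follows.
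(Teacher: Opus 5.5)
Your proof is correct and is essentially the paper's: identify $u(\cdot,\omega)$ with the bifurcating branch $tu^*+t\gamma(t)$, use the scale invariance of $\lambda$ and the $C^2$ convergence $u(t)/t\to u^*$, and compute $-(u^*)'(-\alpha)/(u^*)'(\alpha)=e^{\omega^*\alpha}$. The paper replaces $\omega\to\omega^*$ by $t\to0$ without comment, and your compactness argument fills that step correctly, with two small adjustments: the $C^3$ bounds must be uniform for $\omega$ near $\omega^*$, and to get $t\to0$ you should use $\|u(t)\|_{C^2}\ge c|t|$ rather than bare injectivity. Alternatively, since $\omega(t)<\omega^*$ for small $t>0$ and $\omega(t)\to\omega^*$, the intermediate value theorem plus uniqueness shows directly that every $\omega$ in a left neighbourhood of $\omega^*$ equals $\omega(t)$ for some small $t>0$.
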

\begin{proof}
In view of the previous discussion, we have
\[
\lim_{\omega\to\omega^*} \lambda(\omega) = \lim_{\omega\to\omega^*} -\frac{u'(-\alpha,\omega)}{u'(\alpha,
\omega)}= 
%\lim_{t\to0} -\frac{u'(-\alpha,\omega(t))}{t}\cdot\frac{t}{u'(\alpha,\omega(t))}=
\lim_{t\to0} -\frac{u'(-\alpha,\omega(t))}{u'(\alpha,\omega(t))}=
-\frac{(u^*)'(-\alpha)}{(u^*)'(\alpha)}=e^{\omega^*\alpha}. \qedhere
\]
\end{proof}
\begin{proof}[End of the proofs of Theorems \ref{thm:main1}, \ref{thm:main2}]
All the properties in the theorems are proved in Proposition \ref{prop:main_both}, using as a parameter
$\omega$ instead of $\lambda$. The fact that we can equivalently use $\lambda$ as a parameter is a 
consequence of Proposition \ref{prop:lambda_dot>0}, while the thresholds for such parameter are provided 
by Proposition \ref{prop:lambda*} and equation \eqref{eq:symm_lambda}.
\end{proof}

We conclude this discussion with a more detailed analysis of the bifurcation diagram. Indeed, assuming 
more regularity on the nonlinearity $f(s)=as-g(s)$, it is possible to apply the second part of 
Theorem \ref{thm:bif} to describe the behavior of the bifurcating branch near $(\omega^*,0)$. 

If $g\in C^2$ and $g''(0) \neq 0$ (e.g.~in the logistic case $f(s)=as-bs^2$), we have that
the bifurcating branch can be parametrized, for $|\omega - \omega^*|$ small, by
\begin{equation}\label{eq:bif_logistic}
u=\frac{\omega-\omega^*}{\hat B}\,u^* + o(\omega-\omega^*);
\end{equation}
where, recalling \eqref{eq:hatA},
\begin{equation*}
    \hat B= -\frac{1}{2\hat A} \langle \psi, F_{u,u}(\omega^*,0)[u^*,u^*] \rangle= g''(0)
    \cdot h(\omega^*,\alpha) \neq 0,
\end{equation*}
and
\begin{equation*}
    h(\omega^*,\alpha) = -\frac{3 \pi}{\omega^*} \cosh{\left(\frac{\omega^*\alpha}{2}\right)}\left[\frac{1}{(\omega^*)^2\alpha^2+ \pi^2} - \frac{1}{(\omega^*)^2\alpha^2+ 9\pi^2}\right] < 0.
\end{equation*}
This follows since, in our case, we have
\begin{equation*}
    F_{u,u}(\omega^*,0)[u^*,u^*] = -g''(0) (u^*)^2,\quad
    \langle\psi, F_{u u}(\omega^*,0)[u^*]^2\rangle =- g''(0) \int_{-\alpha}^\alpha e^{-\frac{\omega^*}{2} x } \cos^3{\left(\frac{\pi}{2\alpha}x\right)} \, dx.
\end{equation*}
Moreover, the assumption $g>0$ yields $g''(0)>0$, whence $\hat B<0$. This implies that positive solutions of 
$F(\omega,u)=0$ branch off from the trivial one when $\omega < \omega^*$.

If instead $g''(0)=0$, but $g \in C^3$ at least in a neighborhood of $s=0$, and $g'''(0) \neq 0$ 
(e.g.~for $f(s)=as-bs^3$), we have that the bifurcating branch can be written in the form
\begin{equation*}
    u = \pm \left(\frac{\omega-\omega^*}{\hat C}\right)^\frac{1}{2}u^* + O(\omega-\omega^*),
\end{equation*}
where
\begin{equation*}
    \hat C:= -\frac{1}{6\hat A} \langle \psi, F_{uuu}(\omega^*,0)[u^*]^3 \rangle= g'''(0)\cdot
    \bar{h}(\omega^*,\alpha) \neq 0,
\end{equation*}
and
\begin{equation*}
    \bar{h}(\omega^*,\alpha) = -\frac{\sinh(\omega^*\alpha)}{3\omega^*\alpha} \left[\frac{3}{4\omega^*} + \omega^* \alpha^2\left(\frac{1}{4}\frac{1}{\alpha^2(\omega^*)^2 + 4\pi^2}- \frac{1}{\alpha^2(\omega^*)^2 + \pi^2} \right)\right] < 0.
\end{equation*}
Indeed, it is straightforward to verify that
\begin{equation*}
    F_{uuu}(\omega^*,0)[u^*]^3 = -g'''(0) (u^*)^3,\quad
     \langle\psi, F_{u u u}(\omega^*,0)[u^*]^3\rangle=  -g'''(0) \int_{-\alpha}^\alpha e^{-\omega^* x } \cos^4{\left(\frac{\pi}{2\alpha}x\right)} \, dx.
\end{equation*}
The assumption $g>0$ yields $g'''(0)>0$. Hence $\hat C <0$ and, also in this case, 
the bifurcation branch emanates to the left of $\omega^*$, in accordance with our main results.

\begin{remark}\label{rmk:altern_2}
As a byproduct of the bifurcation analysis, we have that, at least when $g$ is $C^2$ and $g''(0)>0$ and 
in view of \eqref{eq:def_v} and \eqref{eq:bif_logistic},  
\[
v:=\frac{\partial}{\partial \omega} u =\frac{1}{\hat B}u^* + o(1) \qquad\text{ as }\omega\to\omega^*,
\]
where $\hat B<0$. Then, alternative \ref{i:v2} in Lemma \ref{lem:cases_v} is verified when $\omega$ is near 
$\omega^*$. As we already noticed in Remark 
\ref{rmk:cases_v}, since instead if $\omega=0$ then alternative \ref{i:v1} is verified, 
by continuity there exists at least a value of $\omega$ such that alternative \ref{i:v2} is verified with $v'(-\alpha)=0$.
\end{remark}

\begin{remark}\label{rmk:global_rab}
Although this has not an impact in our analysis, we notice that also global bifurcation results can be 
applied to our problem, in particular those in \cite{MR301587}. As a consequence, we have that the branch 
$\omega\mapsto u(\cdot,\omega)$ is an interesting example of a global branch which is not unbounded,
and 
thus it meets the trivial solution at two distinct bifurcation points, i.e.~$(\pm\omega^*,0)$.
\end{remark}

\textbf{Data Availability.} Data sharing not applicable to this article as no datasets were generated or analyzed during the current study.

\bigskip

\textbf{Competing Interest.} The authors report there are no competing interests to declare.

\bigskip

\textbf{Funding Declaration.} Work partially supported by: PRIN-20227HX33Z ``Pattern formation in 
nonlinear phenomena'' - funded  by the European Union-Next Generation EU, Miss. 4-Comp. 1-CUP 
D53D23005690006 and PRIN PNRR P2022YFAJH ``Linear and Nonlinear PDEs: New directions and applications''. GS and GV are members of the INdAM-GNAMPA group (``Gruppo Nazionale per 
l'Analisi Matematica, la Probabilit\`a e le loro Applicazioni -- Istituto Nazionale di Alta
Matematica'').

%-------------------------------------------------------------------------
%	BIBLIOGRAPHY
%-------------------------------------------------------------------------

\bibliographystyle{abbrv}
\bibliography{LV.bib}

\medskip
\small
\begin{flushright}
\noindent 
\verb"giuseppe.spadaro@unical.it"\\
Dipartimento di Matematica e Informatica, Università della Calabria\\ 
Ponte Pietro Bucci 31B, 87036 Arcavacata di Rende, Cosenza (Italy)\medskip\\		
\noindent 
\verb"gianmaria.verzini@polimi.it"\\
Dipartimento di Matematica, Politecnico di Milano\\ 
piazza Leonardo da Vinci 32, 20133 Milano (Italy)\medskip\\
\noindent
\verb"alessandro.zilio@u-pariscite.fr"\\
CNRS, Laboratoire Jacques-Louis Lions (LJLL), \\
Université Paris Cité and Sorbonne Université,
75005 Paris (France)
\end{flushright}

\end{document}